\date{\today}
\author{Bertrand Deroin \and Nicolas Tholozan}
\thanks{B.D.'s research was partially supported by ANR-13-BS01-0002.}
\address{CNRS \\ UMR 8553 \\ D\'epartement de Math\'ematiques et Applications  \\ \'Ecole Normale Sup\'erieure \\ 45, rue d'Ulm \\ 75005 Paris, France.}
\email{bertrand.deroin@ens.fr}
\address{University of Luxembourg, Campus Kirchberg\\
Mathematics Research Unit, BLG\\
6, rue Richard Coudenhove-Kalergi\\
L-1359 Luxembourg\\}
\email{nicolas.tholozan@uni.lu}
\title[Super-maximal representations]{Super-maximal representations from fundamental groups of punctured spheres to $\text{PSL} (2,\mathbb R)$}
\begin{document}

\maketitle

\begin{abstract}
We study a particular class of representations from the fundamental groups of  punctured spheres $\Sigma_{0,n}$  to the group $\text{PSL} (2,\mathbb R)$ (and their moduli spaces), that we call \textit{super-maximal}. Super-maximal representations are shown to be \textit{totally non hyperbolic}, in the sense that every simple closed curve is mapped to a non hyperbolic element. They are also shown to be \textit{geometrizable} (appart from the reducible super-maximal ones) in the following very strong sense : for any element of the Teichm\"uller space $\mathcal T_{0,n}$, there is a unique holomorphic equivariant map with values in the lower half-plane $\mathbb H^-$. In the relative character variety, the components of super-maximal representations are shown to be compact, and symplectomorphic (with respect to the Atiyah-Bott-Goldman symplectic structure) to the complex projective space of dimension $n-3$ equipped with a certain multiple of the Fubiny-Study form that we compute explicitly (this generalizes a result of Benedetto--Goldman \cite{BenedettoGoldman} for the sphere minus four points). Those are the unique compact components in relative character varieties of $\text{PSL}(2,\mathbb R)$. This latter fact will be proved in a companion paper.
\end{abstract}

\section*{Introduction}

\subsection{Overview}  Let $\Sigma_{g,n}$ be a surface obtained from a connected oriented closed surface of genus $g$ by removing $n$ points, called the punctures. We assume in the sequel that the Euler characteristic of $\Sigma_{g,n}$ is negative. Let also $G= \text{PSL}(2,\mathbb R)$ be the group of isometries of the half-planes $\mathbb H^\pm = \{ z \in \mathbb C \ |\ \pm \Im z>0 \}$ equipped with the metrics $\frac{dx^2+ dy^2}{y^2}$ of curvature $-1$, where $z= x+ iy$.  We denote by $\text{Hom}(\pi_1(\Sigma_{g,n}), G)$ the set of representations from the fundamental group of $\Sigma_{g,n}$ to $G$, and by $\text{Rep} (\pi_1(\Sigma_{g,n}), G) = \text{Hom}(\pi_1(\Sigma_{g,n}), G) / G$ its quotient by the action of $G$ by conjugation. We will call this latter the character variety, even though it is not the algebraic quotient (in the sense of geometric invariant theory) that we are considering here. 

A representation $\rho \in \text{Hom} (\pi_1(\Sigma_{g,n}) ,G ) $ determines a flat oriented  $\ProjR{1}$-bundle over $\Sigma_{g,n}$ which, if we forget the flat connection, is encoded up to isomorphism by a class in $H^2(\Sigma_{g,n},\mathbb Z )$, called the Euler class, and denoted $\textbf{eu}(\rho)$.  In the closed case, i.e. when $n=0$, we have $H^2(\Sigma_{g,0}, \mathbb Z ) \simeq \mathbb Z$, so the Euler class is an integer, and it satisfies the well-known Milnor-Wood inequality :
\begin{equation} \label{eq:MilnorWoodClosedCase} |\textbf{eu} (\rho)| \leq |\chi (\Sigma_{g,0}) |~,
\end{equation} 

as proved by Wood \cite{Wood}, following earlier work of Milnor \cite{Milnor}. All the integral values in the interval \eqref{eq:MilnorWoodClosedCase} are achieved on $\text{Hom} (\Sigma_{g,0}, G)$. Goldman proved that the level sets of the Euler class are connected \cite{Goldman}, and Hitchin that they are indeed diffeomorphic to a vector bundle over some symmetric powers of $\Sigma_{g,0}$ \cite{Hitchin}. Goldman proved in his doctoral dissertation that the Euler class is extremal exactly when the representation is the holonomy of a hyperbolic structure on $\Sigma_{g,0}$ \cite{GoldmanThese}. He conjectured more generally that the components of non zero Euler class are generically made of holonomies of \emph{branched} $\mathbb H^\pm$-structures on $\Sigma_{g,0}$ with $k= |\chi(\Sigma_{g,0})|-|\textbf{eu}|$ branch points (see \cite{Goldmanconjecture} and also \cite{Tan} where the problem is discussed).


This paper is the first in a series aiming at studying the analog picture on the \textit{relative} character varieties when the surface $\Sigma_{g,n}$ is not closed, namely when $n>0$. 
We focus here on a particular family of components of the relative character varieties, that we call \textit{super-maximal}. They occur only on punctured spheres $\Sigma_{0,n}$ for $n\geq 3$, for some particular choices of elliptic/parabolic peripheral conjugacy classes. 

We prove that these components are compact, and more precisely that they are symplectomorphic (with respect to the Atiyah-Bott-Goldman symplectic structure) to the complex projective space of dimension $n-3$, equipped with a certain multiple of the Fubini-Study form that we compute explicitly.  This generalizes to any $n\geq 4$ a result obtained by Benedetto-Goldman in the case $n = 4$ \cite{BenedettoGoldman}. 

We also prove that the \emph{super-maximal representations} (i.e. those lying in super-maximal components) have very special algebraic and geometric properties. First, we prove that they are totally non hyperbolic, namely that no simple closed curve of $\Sigma_{0,n}$ is mapped to a hyperbolic conjugacy class of $G$. Moreover, we prove that they are geometrizable by $\mathbb H^-$-conifolds in a very strong way.

\subsection{Volume,  relative Euler class and the refined Milnor-Wood inequality}


In the closed case,  the Euler class relates intimately to the \textit{volume} of the representation, classically defined by the integral 
\begin{equation}\label{eq:Volume} \text{Vol} (\rho ) = \int _{\Sigma_{g,0} } f^* \big( \frac{dx\wedge dy} {y^2} ) \end{equation}
where $f: \widetilde{\Sigma_{g,0}} \rightarrow \mathbb H^+$ is any $\rho$-equivariant smooth map. Namely, we have $\text{Vol} (\rho) = 2\pi\, \textbf{eu} (\rho)$. Burger and Iozzi \cite{BurgerIozzi} and independently  Koziarz and Maubon \cite{KM}, have extended the definition of  the volume of a representation $\rho : \pi_1(\Sigma_{g,n}) \rightarrow G$ to the case of punctured surfaces. (See also Burger--Iozzi--Wienhard \cite{BIW} for an anologous notion in the case of representations in a Lie group of Hermitian type.) This volume can be defined as a bounded cohomology class, or more trivially as an integral of the form \eqref{eq:Volume}, where the behaviour of the equivariant map is constrained in the neighborhood of the cusp: namely, the completion of the metric $f^* \big( \frac{dx^2 + dy^2}{y^2} \big)$ at the neighborhood of the cusps is assumed to be a cone, a parabolic cusp, or an annulus with totally geodesic boundary. 

The analogous Milnor-Wood inequality
\begin{equation} \label{eq:MilnorWoodVolume}  |\text{Vol} (\rho) | \leq 2\pi |\chi (\Sigma_{g,n})|~, \end{equation}
holds in this context \cite{BIW,KM}. It is also proved in \cite{BIW} that the volume is continuous as a function on $\text{Rep} (\pi_1(\Sigma_{g,n}), G)$ and achieves every value in the interval defined by \eqref{eq:MilnorWoodVolume}. 

The volume heavily  depends on the conjugacy class of the peripherals $\rho (c_i)$, where the $c_i$ are elements of $\pi_1(\Sigma_{g,n})$ freely homotopic to positive loops around the punctures. For instance, its reduction modulo $2\pi$ equals the sum $-\sum_i R(\rho(c_i))$, where $R(\rho(c_i))$ is the rotation number of $\rho(c_i)$ \cite[Theorem 12]{BIW}. In order to understand better the dependance of the volume on the $\rho(c_i)$, it is convenient to introduce the following function:
\[\theta : G \to \R_+ \]
that maps an element $g \in G$ to
\begin{itemize}
\item $0$ if $g$ is hyperbolic or positive parabolic (i.e. a parabolic that translates anti-clockwise the horocycles),
\item $2\pi$ if $g$ is negative parabolic or the identity,
\item the value between $0$ of $2\pi$ of the rotation angle of $g$ when $g$ is elliptic.
\end{itemize}

We will also denote $\theta_i(\rho) = \theta(\rho(c_i))$ and $\Theta(\rho) = \sum_{i=1}^n \theta_i(\rho)$.

\begin{defi}
We define the \emph{relative Euler class} of the representation $\rho$ by
\begin{equation}\label{eq: euler class} 
\euler(\rho)  =   \frac{1}{2\pi} \big( \text{Vol} (\rho) + \Theta(\rho) \big)~. \end{equation}
\end{defi}
By \cite[Theorem 12]{BIW}, the relative Euler class is an integer. In fact, it can be shown that it is the genuine Euler class of the flat $\ProjR{1}$-bundle with monodromy $\rho$, relative to some explicit trivializations above the curves $c_i$. When $\rho$ is geometrizable by a $\H^+$-structure, we have $\euler(\rho) = |\chi(\Sigma)| - k$, where $k$ denotes the number of branched points counted with multiplicity (including those occuring at the cusps). 

\begin{MonThm}\label{t:MilnorWood}
For every representation $\rho : \pi_1 (\Sigma_{g,n}) \rightarrow G$, we have the inequality
\begin{equation} \label{eq: MilnorWoodrelative} \inf \big( -|\chi(\Sigma_{g,n})|  + l , \lceil \frac{1}{2\pi} \Theta(\rho) \rceil \big) \leq \euler(\rho) \leq \sup \big( |\chi(\Sigma_{g,n}) | , \lfloor \frac{1}{2\pi} \Theta(\rho) \rfloor \big) , \end{equation}
where $l$ is the number of elliptic/parabolic/identity conjugacy classes among the $O_i$'s, where identity classes are counted twice.
\end{MonThm} 

In particular, the relative Euler class satisfies the classical Milnor-Wood inequality $|\textbf{eu} (\rho)|\leq |\chi(\Sigma_{g,n})|$ unless $g=0$, in which case it can take no more than two additional values: $\euler(\rho) = n-1$ or $n$. 

Note that the relative Euler class does not distinguish connected components of the character variety (which is connected for surfaces with punctures), mainly because the function $\theta: G \to \R_+$ is only upper semi-continuous while the volume is a continuous function, see \cite[Theorem 1]{BIW}. However, it does distinguish connected components of \emph{relative} character varieties. In a companion paper, we will prove that inequality \eqref{eq: MilnorWoodrelative} is sharp in the relative character varieties, at least when peripherals are elliptic. We will also show that the Euler class  distinguishes their components, appart from the case of the pair of pants or the one-holed torus. This extends Goldman's theorem \cite{Goldman} to the relative case. 

\subsection{Super-maximal representations} When the Milnor-Wood inequality is violated for the number $\textbf{eu}(\rho)$, we call the representation $\rho$ \textit{super-maximal}. By Theorem \ref{t:MilnorWood}, those are the representations $\rho: \pi_1 (\Sigma_{0,n}) \rightarrow G$ whose Euler class satisfies 
$$\euler(\rho) = n - 1 \text{ or } n . $$
Such representations only occur when 
\begin{equation} \label{eq: restriction} 2\pi (n-1) \leq  \Theta(\rho)  \leq 2\pi n . \end{equation}
Moreover, $\euler(\rho) = n$ iff $\rho$ is the trivial representation (sending everyone to the identity). However, the set of representations such that  $\euler(\rho) = n-1$ has non empty interior in the set $\text{Hom} (\pi_1(\Sigma_{g,n}), G)$. For instance, such representations can be constructed by considering a necklace of negative triangle groups with appropriate angles, see subsection \ref{ss: Hamiltonian action}. We first prove that super-maximal representations answer positively Bowditch's question \cite[Question C]{Bowditch} in a very strong way:

\begin{MonThm}\label{t: totally non hyperbolic}
Every super-maximal representation $\rho: \pi_1(\Sigma_{0,n}) \rightarrow G$ is totally non hyperbolic, namely every element of $\pi_1(\Sigma_{0,n})$ which is homotopic to a simple closed curve is mapped by $\rho$ to a non hyperbolic isometry of $G$. 
\end{MonThm}

Totally non hyperbolic representations were already known to exist when $g=0$ and $n=4$: Shinpei Baba observed that the representations lying in the compact component of the relative character varieties of $\text{PSL} (2,\mathbb R)$ discovered by Benedetto and Goldman \cite{BenedettoGoldman} have this property. 

Their existence in genus zero contrasts with the higher genus case: indeed, Gallo, Kapovich and Marden \cite[Part A]{GKM} showed (among other things) that a non elementary representation from the fundamental group of a closed surface with values in $\text{PSL}(2,\mathbb C)$ maps  a certain element of the fundamental group isotopic to a simple closed curve to a hyperbolic element. 

A consequence of Theorem \ref{t: totally non hyperbolic} together with  the work of Gueritaud-Kassel \cite{GK} is that a super-maximal representation is dominated by any Fuchsian representation (the holonomy of a complete metric of finite volume on $\Sigma_{0,n}$). To be more precise, let $l(g):= \inf_{x\in \H^+} d(x,g(x)) $ be the translation length of an element $g\in G$, and for every representation $\rho \in \text{Hom}(\pi_1(\Sigma_{g,n}), G)$, let $L_\rho: \pi_1(\Sigma_{0,n}) \to [0,\infty)$ be defined by $L_\rho (g) = l(\rho(g))$. Then for any couple $(j, \rho)$ formed by a Fuchsian representation $j$ and a super-maximal representation $\rho$, we have $L_\rho \leq L_j$. We deduce 

\begin{MonCoro} \label{c: compacity super maximal}
In the character variety $\Rep (\pi_1(\Sigma_{0,n}), G)$, the subset of super-maximal representations  is compact.
\end{MonCoro}

\subsection{Compact components in the relative character varieties}

Let us fix $\mathbf{\alpha} = (\alpha_1, \ldots, \alpha_n) \in (0,2\pi)^n$. We denote by $\Rep_{\bf{\alpha}}(\pi_1(\Sigma_{0,n}), G)$ the set of conjugacy classes of representations such that $\theta_i(\rho)= \alpha_i$. 
Because the $\alpha_i$'s are different from $0$ and $2\pi$ the space $\Rep_{\bf{\alpha}}(\Sigma_{0,n},G)$ has the structure of a smooth manifold and carries a natural symplectic form that has been constructed Goldman \cite{GoldmanSymplecticStructure}, building on works of Atiyah and Bott \cite{AtiyahBott}. Let $\Rep_{\bf{\alpha}}^{SM}(\Sigma_{0,n}, G)$ denote the set of super-maximal representations in $\Rep_{\bf{\alpha}}(\Sigma_{0,n}, G)$. Corollary \ref{c: compacity super maximal} implies that $\Rep_{\bf{\alpha}}^{SM}(\Sigma_{0,n}, G)$ forms a (possibly empty) compact connected component of $\Rep_{\bf{\alpha}}(\Sigma_{0,n}, G)$. We prove

\begin{MonThm} \label{t:SymplecticSuperMaximal}
If $2(n-1)\pi <\sum_{i=1}^n \alpha_i < 2n\pi$, then the space $\Rep_{\mathbf{\alpha}}^{SM}(\Sigma_{0,n}, G)$ is non-empty and symplectomorphic to $\ProjC{n-3}$, with a multiple of the Fubini--Study symplectic form whose total volume is
\[ \frac{(\pi\lambda)^{n-3}}{(n-3)!}~,\]
where
\[\lambda = \sum_{i=1}^n \alpha_i - 2(n-1)\pi~.\]
\end{MonThm}

The proof is done in subsection \ref{ss: Delzantpolytope}. It makes use of a faithful Hamiltonian action of the torus $(\mathbb R/\pi \mathbb Z)^{n-3}$ on $\Rep_{\bf{\alpha}}^{SM}(\Sigma_{0,n}, G)$, associated to a pair-of-pants decomposition of $\Sigma_{0,n}$. Delzant proved in \cite{Delzant88} that compact symplectic manifolds provided with a faithful Hamitonian action of a torus of half the dimension are classified by the image of their \emph{moment map}, which is a polytope satisfying certain arithmeticity conditions. Here we compute explicitly the Delzant polytope of our action and recognize the one corresponding to a natural action of $\left(\R/\pi \mathbb{Z}\right)^{n-3}$ on $\ProjC{n-3}$ with a certain multiple of the Fubini--Study symplectic form.

\subsection{Geometrization by $\mathbb H^-$-conifolds}
In Section \ref{s:Geometrization}, we show that super-maximal representations can be geometrized by $\mathbb H^-$-structures in a very strong way. In fact, the set of all possible geometrizations by a $\mathbb H^-$-structure is a copy of Teichm\"uller space $\mathcal T_{0,n}$.

\begin{MonThm}\label{t: geometrization}
Let $\rho: \pi_1(\Sigma_{0,n}) \rightarrow G$ be a super-maximal representation. Then either it is Abelian, or for every $\sigma \in \mathcal T_{0,n}$, there exists a unique $\rho$-equivariant map $\widetilde{\Sigma_{0,n}} \rightarrow \mathbb H^-$ which is holomorphic with respect to the complex structure $\sigma$. 
\end{MonThm}

In terms of non abelian Hodge theory, Theorem \ref{t: geometrization} can be rephrased by saying that a flat bundle over a punctured sphere with anti-supermaximal monodromy  is a variation of Hodge structure. 

This property characterizes the super-maximal representations. For instance, for maximal representations, namely those satisfying $\textbf{eu} (\rho )= \chi (\Sigma_{g,n})$, the isomonodromic space of conical $\mathbb H^+$-structures is discrete, as was proven by Mondello in \cite{Mondello}.  In a companion paper, we will adress the problem of the geometrization of representations that are merely maximal, using different techniques.  

Notice that Theorem \ref{t: geometrization}, together with the help of the Schwarz lemma, gives an alternative proof of the fact that super-maximal representations are dominated by Fuchsian representations. Also, the proof of Theorem \ref{t: geometrization} allows to find explicit parametrizations of super-maximal components by symmetric powers of the Riemann sphere (which are models for the complex projective spaces). These parametrizations transit via the Troyanov uniformization theorem.

\section{The refined Milnor--Wood inequality}\label{s: MilnorWood}

In this section, we establish Theorem \ref{t:MilnorWood}.

\subsection{Reduction to a bound from above}

Let us explain first that Theorem \ref{t:MilnorWood} is a consequence of the following result, whose proof will be postponed to the next two subsections.

\begin{prop}\label{p: reduction}  We have
\begin{equation}\label{eq: bound from above} \euler(\rho) \leq \sup \big( |\chi(\Sigma_{g,n}) | , \lfloor \frac{1}{2\pi}\Theta(\rho) \rfloor \big) \leq |\chi(\Sigma_{g,n})|+2~. \end{equation}
The inequality $\euler(\rho) > |\chi(\Sigma_{g,n})|$ is  possible only when $g=0$, none of the conjugacy classes are hyperbolic, and the volume is non-positive. Moreover, $\euler(\rho) = |\chi(\Sigma_{0,n})| + 2$ if and only if $\rho$ is the trivial representation.
\end{prop}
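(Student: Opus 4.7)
My plan for the proof of Proposition~\ref{p: reduction} is as follows.

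The second inequality in \eqref{eq: bound from above} is elementary. Since $\theta(g) \leq 2\pi$ for every $g \in G$, one has $\Theta(\rho) \leq 2\pi n$, hence $\lfloor \Theta(\rho)/(2\pi) \rfloor \leq n = |\chi(\Sigma_{g,n})| + 2 - 2g \leq |\chi(\Sigma_{g,n})| + 2$. Equality forces $g = 0$ together with $\theta_i(\rho) = 2\pi$ for every $i$, that is, every $\rho(c_i)$ is the identity or a negative parabolic.

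For the main inequality, the key observation is that, by the definition of $\euler(\rho)$ and the integrality $\euler(\rho) \in \mathbb Z$ (\cite[Theorem~12]{BIW}), the bound $\euler(\rho) \leq \lfloor \Theta(\rho)/(2\pi) \rfloor$ is equivalent to the simple statement $\text{Vol}(\rho) \leq 0$. Consequently, the desired inequality $\euler(\rho) \leq \max\bigl(|\chi(\Sigma_{g,n})|, \lfloor \Theta(\rho)/(2\pi) \rfloor\bigr)$ is equivalent to the dichotomy: for every representation, either $\euler(\rho) \leq |\chi(\Sigma_{g,n})|$ or $\text{Vol}(\rho) \leq 0$. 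My plan is to rule out the simultaneous occurrence $\text{Vol}(\rho) > 0$ and $\euler(\rho) > |\chi(\Sigma_{g,n})|$ by establishing the refined relative Milnor--Wood bound
$$ \text{Vol}(\rho) + \Theta(\rho) \leq 2\pi |\chi(\Sigma_{g,n})| $$
whenever $\text{Vol}(\rho) > 0$. The two further conditions on $(g,\rho)$ in the regime $\euler(\rho) > |\chi(\Sigma_{g,n})|$ will be extracted as by-products: if $g \geq 1$ then $\text{Vol}(\rho) \leq 0$ already gives $\euler(\rho) \leq \Theta(\rho)/(2\pi) \leq n \leq |\chi(\Sigma_{g,n})|$, while the presence of a hyperbolic peripheral can be seen to improve the refined bound above by a strictly negative term, forbidding $\euler(\rho) = |\chi(\Sigma_{0,n})|+1$ in that case as well.

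The main obstacle, which I expect to be the technical core of the proof, lies in establishing the refined Milnor--Wood bound. My approach is to fix a $\rho$-equivariant smooth developing map $f : \widetilde{\Sigma_{g,n}} \to \mathbb H^+$ with model asymptotic behaviour adapted to each peripheral type (an annulus with totally geodesic boundary for hyperbolic peripherals, a hyperbolic cone of angle $\theta_i$ for elliptic peripherals, and a horocyclic funnel for parabolic peripherals), and then compute $\text{Vol}(\rho) = \int f^*(dx\wedge dy/y^2)$ along an exhaustion of $\Sigma_{g,n}$ by compact subsurfaces. A Gauss--Bonnet / boundary-term analysis of each collar should contribute exactly $-\theta_i$ per puncture, while the bulk estimate --- in the spirit of the classical inequality of Burger--Iozzi--Wienhard and Koziarz--Maubon --- yields the $2\pi|\chi(\Sigma_{g,n})|$ term. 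The technical difficulty is to uniformly handle elliptic, parabolic, and hyperbolic peripheral types inside a single Gauss--Bonnet computation while controlling the sign of the Jacobian of $f$, which is what makes the bound sensitive to the sign of $\text{Vol}(\rho)$.

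For the equality case $\euler(\rho) = |\chi(\Sigma_{0,n})|+2 = n$: since $\text{Vol}(\rho) \leq 0$ and $\Theta(\rho) \leq 2\pi n$, the identity $2\pi n = \text{Vol}(\rho) + \Theta(\rho)$ forces $\text{Vol}(\rho) = 0$ and $\Theta(\rho) = 2\pi n$, so every $\rho(c_i)$ is the identity or a negative parabolic. A short rigidity argument --- capping the punctures at which $\rho(c_i) = \mathrm{Id}$ by disks produces a representation of $\pi_1(S^2) = 1$, while the simultaneous saturation of the refined bound and of $\text{Vol}(\rho) = 0$ rules out any actual parabolic peripheral --- then concludes that $\rho$ must be the trivial representation; the converse is immediate.
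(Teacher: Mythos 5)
Your reduction of inequality \eqref{eq: bound from above} to the dichotomy ``either $\text{Vol}(\rho)\leq 0$ or $\euler(\rho)\leq|\chi(\Sigma_{g,n})|$'' is correct and is a clean way to see what the proposition really says, and the second inequality and the $g\geq 1$ case do follow as you indicate. But the core of your argument --- the ``refined Milnor--Wood bound'' $\text{Vol}(\rho)+\Theta(\rho)\leq 2\pi|\chi(\Sigma_{g,n})|$ under the hypothesis $\text{Vol}(\rho)>0$ --- is not proved; it is essentially a restatement of the proposition, and the Gauss--Bonnet plan you sketch for it does not work as described. The quantity $\int f^*(dx\wedge dy/y^2)$ is computed from an \emph{arbitrary} smooth equivariant map, whose Jacobian changes sign; the hypothesis $\text{Vol}(\rho)>0$ is a single global integral condition and gives no pointwise control on that sign, so there is no mechanism by which a collar-by-collar boundary analysis would ``contribute exactly $-\theta_i$'' or by which the bulk term would be bounded by $2\pi|\chi|$. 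The sentence in which you defer ``controlling the sign of the Jacobian'' to a technical difficulty is precisely where the entire content of the theorem lives. The same gap propagates to the two remaining assertions: the exclusion of hyperbolic peripherals when $\euler(\rho)=|\chi|+1$ (your dichotomy only gives $\euler\leq n-1=|\chi|+1$ there, so you need the unproven ``strictly negative term''), and the rigidity in the equality case $\euler(\rho)=n$ (capping by disks only handles punctures with identity image; excluding negative parabolic peripherals again invokes the unproven saturation analysis).

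For comparison, the paper proves the proposition combinatorially rather than analytically: it first treats the pair of pants $\Sigma_{0,3}$ by an explicit case analysis on the fixed points and on the lifts $\widetilde{\rho(c_i)}$ to $\widetilde{G}$ with prescribed translation numbers (the hyperbolic case is handled by trapping the integer $k$ with $\widetilde{\rho(c_1)}\widetilde{\rho(c_2)}\widetilde{\rho(c_3)}=m^k$ between $m^{-2}$ and $m^{2}$ at the two fixed points of the hyperbolic element), and then runs an induction on $n$ and $g$ using the additivity formula $\euler(\rho)=\euler(\rho')-2,\,-1,\,0$ according to whether the glued curve has identity, elliptic/parabolic, or hyperbolic image. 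If you want to salvage your outline, you would need to replace the Gauss--Bonnet step by an argument of this combinatorial type (or by a genuine bounded-cohomology estimate refining \cite{BIW}); as written, the proposal reformulates the statement but does not prove it.
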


Let us prove Theorem \ref{t:MilnorWood} assuming Proposition \ref{p: reduction}. Let $\overline{\rho}$ be the conjugation of  $\rho$ by an orientation-reversing isometry of $\H^+$.  We have the formulas 
$$ \text{Vol} (\overline{\rho}) = - \text{Vol} (\rho)$$
and 
$$ \Theta (\rho ) + \Theta (\overline{\rho})  = 2 \pi l~, $$
where $l$ is the number of elliptic or parabolic cusps (counting the identity twice). In particular, we deduce 
\begin{equation}\label{eq: conjugation}   \euler(\overline{\rho}) = \frac{1}{2\pi} \big(  \text{Vol} (\overline{\rho}) + \Theta (\overline{\rho}) \big) = l + \mathbf{eu}(\rho) . \end{equation}
Applying \eqref{eq: bound from above} to $\overline{\rho}$, we get 
$$   \euler(\rho) \geq  \inf \big( -|\chi(\Sigma_{g,n}) |+ l  , \lceil  \frac{1}{2\pi}\Theta(\rho) \rceil  \big) ,$$ 
which concludes the proof of Theorem \ref{t:MilnorWood} if \eqref{eq: bound from above} holds.

\subsection{The case of the pair of pants}

In this paragraph we prove Proposition \ref{p: reduction} in the case of the pair of pants $\Sigma_{0,3}$.

The universal cover $\widetilde{G}$ of the group $G$ acts faithfully on $\widetilde{\mathbb R{\bf  P}^1}$. We denote by $m$ the generator of the covering group that acts positively with respect to the natural orientation of $\mathbb R {\bf P} ^1$, namely $m(x) >x$ for every $x\in \widetilde{\mathbb R {\bf P}^1}$. On $\widetilde{G}$ there is a well-defined notion of translation number: identifying $\widetilde{\mathbb R {\bf  P}^1}$ with $\mathbb R$ in such a way that $m$ is conjugated to the translation $x\mapsto x+ 2\pi$, the translation number $T(g)$ of an element $g\in \widetilde{G}$ is the following limit: 
$$ T(g) = \lim _{k\rightarrow \pm \infty} \frac{g^k (x)-x} {k} $$
It does not depend on $x\in \mathbb R$. We refer to \cite{Ghys} for a survey on this notion.

The fundamental group $\pi_1(\Sigma_{0,3})$ is generated by three peripheral elements $c_1, c_2, c_3$ that satisfy the relation $c_1 c_2 c_3= 1$, and that correspond to positively oriented loops around the punctures. For each $i= 1,2,3$, we denote by $\widetilde{\rho(c_i)}$ the unique lift of $\rho(c_i) $ in $\widetilde{G}$ having translation number $\theta_i(\rho)$. Notice that since $\rho (c_1)\rho(c_2)\rho(c_3)= 1$, there exists $k\in \mathbb Z$ such that 
$$ \widetilde{\rho(c_1)} \widetilde{\rho (c_2)} \widetilde{\rho (c_3)} = m ^k .$$

\begin{lem}\label{l:formula euler class} We have $\euler(\rho) = k$.\end{lem}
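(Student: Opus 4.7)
My plan is to compute $\text{Vol}(\rho)$ by Stokes' theorem on a fundamental hexagon of the pair of pants, and to match the boundary contributions to the lift data $\widetilde{\rho(c_i)}$. Choose a hexagonal fundamental domain $D \subset \widetilde{\Sigma_{0,3}}$ whose boundary alternates between three ``gluing arcs'' $a_1, a_2, a_3$ paired by the generators $c_1, c_2, c_3$, and three ``cusp arcs'' $\beta_1, \beta_2, \beta_3$, each projecting to a small positively oriented loop around the $i$-th puncture. Build a $\rho$-equivariant smooth map $f: \widetilde{\Sigma_{0,3}} \to \mathbb{H}^+$ which, on each cusp neighborhood, coincides with a standard local model for the prescribed geometric completion (cone, parabolic cusp, or totally geodesic funnel).

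On $\mathbb{H}^+$, the hyperbolic area form admits the primitive $\alpha := dx/y$, so Stokes' theorem gives
\[\text{Vol}(\rho) \;=\; \int_D f^*\omega \;=\; \int_{\partial D} f^* \alpha.\]
For any $g \in G$, the 1-form $g^*\alpha - \alpha$ is closed, hence exact on the contractible half-plane; write $g^*\alpha - \alpha = d\Phi_g$. A direct computation identifies, up to the $2\pi \mathbb{Z}$ ambiguity inherent in $\Phi_g$, the possible continuous choices of $\Phi_g$ with the lifts of $g$ to $\widetilde{G}$. With this matching, $\Phi$ becomes an honest group cocycle on $\widetilde{G}$ whose coboundary recovers the translation-number cocycle; in particular, for any lift $\tilde h \in \widetilde{G}$, the value of $\Phi_{\tilde h}$ along any small loop around a fixed point or invariant horocycle of $h$ evaluates to the translation number $T(\tilde h)$.

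Now group the boundary contributions. The two gluing arcs identified by $c_i$ yield a defect equal to $\Phi_{\widetilde{\rho(c_i)}}$ evaluated at the shared endpoint; the three defects telescope, thanks to the cocycle identity and the fundamental-group relation $c_1 c_2 c_3 = 1$, to $\Phi_{\widetilde{\rho(c_1)}\widetilde{\rho(c_2)}\widetilde{\rho(c_3)}} = \Phi_{m^k} = 2\pi k$. On each cusp arc $\beta_i$, the model map forces $\int_{\beta_i} f^*\alpha = -\theta_i(\rho)$, checked case by case (elliptic rotation of angle $\theta_i$; positive parabolic $= 0$; negative parabolic or identity $= -2\pi$; hyperbolic/funnel $= 0$). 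Summing,
\[\text{Vol}(\rho) \;=\; 2\pi k \;-\; \Theta(\rho),\]
which rearranges to $\euler(\rho) = k$.

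\textbf{Main obstacle.} The delicate step is the identification of $\Phi$ with the translation-number cocycle on $\widetilde{G}$, together with the verification that the cusp integrals evaluate to $-\theta_i(\rho)$ uniformly in the type of $\rho(c_i)$. The conventions on $\theta$ introduced just before the lemma --- in particular the rule $\theta(\mathrm{id})=2\pi$ and the distinction between positive and negative parabolics --- are calibrated precisely so that this bookkeeping works in every case, including at the jump loci where the type of the peripheral changes.
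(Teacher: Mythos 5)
Your route is genuinely different from the paper's. The paper does not recompute the volume at all: it takes an arbitrary homomorphic lift $\tilde\rho\colon\pi_1(\Sigma_{0,3})\to\widetilde G$, writes $\widetilde{\rho(c_i)}=\tilde\rho(c_i)m^{k_i}$, uses centrality of $m$ to get $k_1+k_2+k_3=k$ and $\theta_i(\rho)=T(\tilde\rho(c_i))+2\pi k_i$, and then quotes \cite[Theorem 12]{BIW}, which expresses $\text{Vol}(\rho)$ in terms of the translation numbers $T(\tilde\rho(c_i))$. What you propose is essentially a proof of that cited theorem by Stokes' theorem on a fundamental polygon. That is a legitimate, more self-contained strategy, but it re-derives an input the paper deliberately treats as a black box, and all of the real content of the lemma is concentrated in exactly the steps you defer.

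Two of those steps are problematic as stated. First, a primitive $\Phi_g$ of $g^*\alpha-\alpha$ is a priori defined only up to an \emph{arbitrary} additive constant, not up to $2\pi\Z$; to identify its normalizations with the lifts of $g$ to $\widetilde G$ you need an explicit formula (for $\alpha=dx/y$ one can take $\Phi_g(z)=2\arg(cz+d)$, where the branch of the argument is the residual $2\pi\Z$ ambiguity) together with a verification that $\Phi_{\tilde g\tilde h}=\Phi_{\tilde h}+\Phi_{\tilde g}\circ h$ holds exactly for the corresponding lifts; none of this is automatic. Second, and more seriously, the claimed cusp contributions are wrong for the global primitive $\alpha=dx/y$: if the completion at $p_i$ is a cone (elliptic peripheral) or a smooth/branch point (identity peripheral), then $\int_{\beta_i}f^*\alpha$ equals, by Stokes again, the area enclosed by the developed arc and tends to $0$ as $\beta_i$ shrinks to the puncture --- it is not $-\theta_i(\rho)$, and in the identity case it is certainly not $-2\pi$. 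The terms $-\theta_i(\rho)$ actually come from the mismatch between the two endpoints of each paired gluing arc, i.e.\ from the variation of $\Phi_{\widetilde{\rho(c_i)}}$ along the cusp arcs; this is precisely where the conventions $\theta(\mathrm{id})=2\pi$ and the positive/negative parabolic dichotomy enter. Your ledger reaches the correct total $2\pi k-\Theta(\rho)$, but the individual entries are misattributed, so the case-by-case check you announce would not in fact close. Either redo the boundary accounting with the defects assigned to the gluing arcs, or take the paper's shortcut through the lifted peripherals and \cite{BIW}.
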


\begin{proof} Lift $\rho$ to a representation $\tilde{\rho}: \pi_1(\Sigma_{0,3}) \rightarrow \widetilde{G}$. Then for each $i$ there exists some integer $k_i$ such that 
$$ \widetilde{\rho(c_i)} = \tilde{\rho} (c_i) m^{k_i} .$$
We then have the relations 
$$ k_1 + k_2 + k_3 = k  \text{ and } \theta_i(\rho) = T (\tilde{\rho} (c_i) ) + k_i.$$
Both come from the fact that $m$ belongs to the center of $\widetilde{G}$. So 
$$ \sum_i \theta_i(\rho) = \sum_i T (\tilde{\rho} (c_i) ) + 2\pi k,$$
and the claim follows from \cite[Theorem 12]{BIW}.
\end{proof}

We now proceed to a case-by-case analysis. 


\subsubsection*{Case of an identity peripheral.} We first consider the case where $\rho(c_i) = \Id$ for some~$i$. Applying a cyclic permutation if necessary,  we can assume $\rho(c_3) = \Id$. Thus $\rho(c_2) = \rho(c_1)^{-1}$. We then have $\widetilde{\rho(c_3)} = m$ and 
\begin{displaymath}
\begin{array}{rcll}
m^k = \widetilde{\rho(c_1)} \widetilde{\rho(c_2)} \widetilde{\rho(c_3)} & = & m & \textrm{if $\rho(c_1),\rho(c_2)$ are hyperbolic}\\
 \ & = & m^2 & \textrm{if $\rho(c_1), \rho(c_2)$ are elliptic or parabolic}\\
 \ & = & m^3 & \textrm{if $\rho(c_1)=\rho(c_2)=1$.}
 \end{array}
\end{displaymath}
Notice that in the first case $\Theta(\rho) = 2\pi$, in the second case, $\Theta(\rho) =  4\pi$, and in the third case $\Theta(\rho) = 6\pi$. By Lemma \ref{l:formula euler class}, we thus have $\euler(\rho) = \frac{1}{\pi}\Theta(\rho)$, which proves the inequality \ref{eq: bound from above}.


\subsubsection*{Case of a hyperbolic peripheral.} Assume now that one of the $\rho(c_i)$'s is hyperbolic. Up to cyclic permutation, we can assume that $\rho(c_3)$ is hyperbolic. Notice that the lifts $\widetilde{\rho(c_i)}$ are chosen so that 
\begin{equation}\label{eq: properties of lift}   m^{-1} (y) < \widetilde{\rho (c_i)} (y) \leq m(y) \end{equation}
for every $y\in \widetilde{\mathbb R P^1}$. Moreover, there exist two points $x^\pm \in \widetilde{\mathbb R {\bf  P}^1}$ such that 
$$   \widetilde{ \rho(c_3) } (x ^+ ) > x^+ \text{ and }  \widetilde{ \rho(c_3) } (x ^- ) < x^-. $$
From \eqref{eq: properties of lift}, we deduce that 
$$ m^k (x^+) = \widetilde{ \rho(c_1) }\widetilde{ \rho(c_2) }\widetilde{ \rho(c_3) } (x^+)> m^{-2} (x^+)$$ 
and similarly 
$$ m^k (x^-) =  \widetilde{ \rho(c_1) }\widetilde{ \rho(c_2) }\widetilde{ \rho(c_3) } (x^-) < m^{2} (x^-).$$
The integer $k$ hence satisfies $|k|\leq 1$, which implies the proposition in the case one of the $\rho(c_i)$ is hyperbolic. \footnote{Note that there exists a representation having $\rho(c_1),\rho(c_2)$ negative parabolic and $\rho(c_3)$ hyperbolic:  such a representation has $\euler(\rho) = 1$ but $\Theta(\rho) = 4\pi$, showing that Theorem \ref{t:MilnorWood} is not sharp when some peripheral is parabolic.}


\subsubsection*{Case where none of the $\rho(c_i)$'s is identity or hyperbolic.}  In this case each $\rho (c_i)$ has a unique fixed point $p_i \in \mathbb H\cup \partial \mathbb H$. Either they are distinct, or equal.

We first consider the case where $p_1 = p_2 = p_3$. In this case, the volume is zero. Moreover, if $p_i$ lies in $\H$, then $\theta_i(\rho) <2\pi$ since none of the $\rho(c_i)$ are the identity. In particular $\Theta(\rho) = 2\pi$ or $4\pi$. If $p_i$ lies in $\partial \H$, then one of the $\rho(c_i)$ has to be a positive parabolic, since their product is $1$. In particular, $\Theta(\rho) = 2\pi$ or $4\pi$ as before. So we are done.

Suppose now that the $p_i$'s are distinct. In this case, the image of $\rho$ is a ``triangle group''. More precisely, for $p,q \in \mathbb H \cup \partial \mathbb H$ distincts, let $\sigma_{pq}$ be the reflection with respect to the geodesic $(pq)$. We then have the formulas 
$$ \rho(c_1) = \sigma_{p_3p_1} \sigma_{p_1p_2} ,\  \rho(c_2) = \sigma _{p_1p_2} \sigma_{p_2 p_3} \text{ and } \rho(c_3) = \sigma_{p_2p_3} \sigma _{p_3p_1}$$
(see Figure \ref{fig:TriangularRep}.)

\begin{figure}  
\includegraphics[width=9cm]{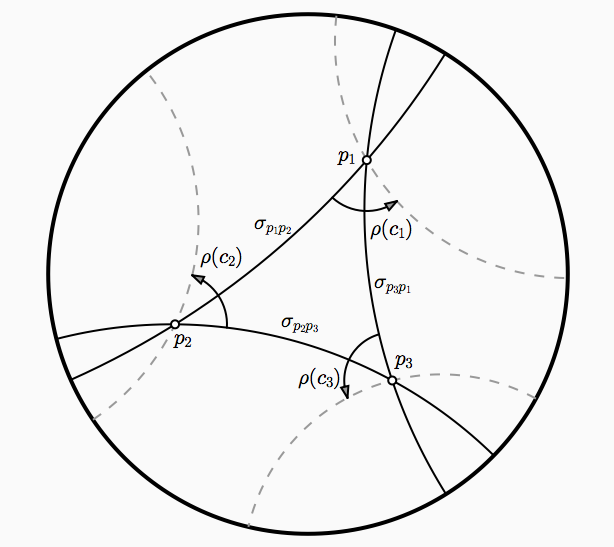}
\caption{A representation with the $\rho(c_i)$ elliptic. \label{fig:TriangularRep}}
\end{figure}

Since none of the $\rho(c_i)$'s is the identity, the triangle $ \Delta = p_1p_2p_3$ is non degenerate. In particular, $\rho$ is the holonomy of a positive (if $p_1p_2p_3$ is clockwise oriented) or negative (if $p_1p_2p_3$ is anti-clockwise oriented) hyperbolic metric on the sphere minus three points, obtained by gluing two copies of $\Delta$. In the first case, $\theta_i(\rho)$ is twice the angle of $\Delta$ at $p_i$, and the volume of $\rho$ twice the volume of $\Delta$, so by Gauss-Bonnet we get  $\euler(\rho) = 1$.  In the second case, $2\pi - \theta_i (\rho)$ is twice the angle of $\Delta$ at $p_i$, and the volume of $\rho$ is minus twice the volume of $\Delta$, so we get $ \euler (\rho) = 2$ in this case.  

The proof of Proposition \ref{p: reduction} in the case of $\Sigma_{0,3}$ is now complete.

\subsection{The general case}

Crucially, our induction will use the following fact:

\begin{prop}
Assume $\Sigma$ is obtained from a (possibly disconnected) surface $\Sigma'$ by guling $b$ with $b'^{-1}$, where $b$ and $b'$ are two boundary curves of $\Sigma'$. Let $\rho$ be a representation of $\pi_1(\Sigma)$ into $G$ and $\rho'$ the restriction of $\rho$ to $\pi_1(\Sigma')$. Then we have the following:
\begin{itemize}
\item if $\rho(b)$ is the identity, then
\[\euler(\rho) = \euler(\rho') -2~;\]
\item if $\rho(b)$ is parabolic or elliptic, then
\[\euler(\rho) = \euler(\rho') -1~;\]
\item if $\rho(b)$ is hyperbolic, then
\[\euler(\rho) = \euler(\rho')~.\]
\end{itemize}
(If $\Sigma'$ has several connected components, we denote my $\mathbf{eu}(\rho')$ the sum of the Euler classes of the restrictions of $\rho$ to the fundamental group of each connected component.)
\end{prop}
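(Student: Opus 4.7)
The strategy is to analyze separately the behavior of the two constituents of $\euler$ under the gluing operation. Writing
\[ \euler(\rho)-\euler(\rho') = \tfrac{1}{2\pi}\bigl(\text{Vol}(\rho)-\text{Vol}(\rho')\bigr) + \tfrac{1}{2\pi}\bigl(\Theta(\rho)-\Theta(\rho')\bigr), \]
one observes that passing from $\Sigma$ to $\Sigma'$ introduces exactly two additional peripheral classes, $b$ and $b' = b^{-1}$, while preserving the classes of the other $c_i$. Hence
\[ \Theta(\rho')-\Theta(\rho) = \theta(\rho(b)) + \theta(\rho(b)^{-1}). \]

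The right-hand side is immediate from the case definition of $\theta$. If $\rho(b)$ is hyperbolic, so is $\rho(b)^{-1}$, and both values vanish. If $\rho(b)$ is parabolic, then $\rho(b)$ and $\rho(b)^{-1}$ are parabolics of opposite sign, and the sum is $0 + 2\pi = 2\pi$. If $\rho(b)$ is elliptic with rotation angle $\alpha \in (0,2\pi)$, then $\rho(b)^{-1}$ rotates by $2\pi-\alpha$, so the sum is $\alpha + (2\pi-\alpha) = 2\pi$. Finally, when $\rho(b) = \Id$, both values equal $2\pi$, for a total of $4\pi$. Dividing by $2\pi$ yields the corrections $0$, $-1$, and $-2$ announced in the three bullets, \emph{provided} one can show the complementary equality $\text{Vol}(\rho) = \text{Vol}(\rho')$.

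Establishing this volume equality is the remaining task. My plan is to pick a $\rho$-equivariant smooth map $f \colon \widetilde{\Sigma} \to \mathbb H^+$ realizing, at each puncture of $\Sigma$, the metric completion (cone, parabolic cusp, or hyperbolic funnel with totally geodesic boundary) prescribed by the Burger--Iozzi and Koziarz--Maubon definition, so that $\text{Vol}(\rho) = \int_\Sigma f^*\omega$ with $\omega = y^{-2}\,dx\wedge dy$. Its restriction to $\widetilde{\Sigma'}$ is automatically $\rho'$-equivariant, but along the two new boundary curves $b$ and $b'$ (which come from a smooth region of $\Sigma$) the prescribed completion is not yet in place. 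I would then modify $f$ inside a $\rho(b)$-invariant tubular neighborhood of $b$, and separately of $b'$, installing the completion appropriate to $\rho(b)$: a cone when $\rho(b)$ is non-trivially elliptic, a parabolic cusp when parabolic, a totally geodesic funnel when hyperbolic, and a smooth disk of cone angle $2\pi$ when $\rho(b) = \Id$. The modified map computes $\text{Vol}(\rho')$.

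The principal obstacle is to show that this local modification does not change the integral. A naive Stokes argument is obstructed by the fact that $\omega$ admits no $G$-invariant primitive on $\mathbb H^+$ (the natural candidate $-dx/y$ is not $G$-invariant), so the difference of integrals, which is supported on a $\rho(b)$-invariant cylinder, must be analyzed through the lift of $\rho(b)$ to $\widetilde{G}$ and the associated translation number. I expect the cleanest route is to invoke the bounded-cohomological reformulation of the volume of Burger--Iozzi--Wienhard, for which the restriction of the defining cocycle from $\pi_1(\Sigma)$ to $\pi_1(\Sigma')$ is manifestly compatible with the fixed completions, so that volume additivity is built into the definition. Combining this additivity with the elementary $\Theta$-computation above yields each of the three cases of the proposition.
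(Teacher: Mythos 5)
Your proposal follows essentially the same route as the paper: the paper's proof likewise splits $\euler$ into its volume and $\Theta$ parts, asserts $\mathrm{Vol}(\rho)=\mathrm{Vol}(\rho')$ by additivity of the volume (citing nothing further, with the justification implicitly resting on the Burger--Iozzi--Wienhard framework you invoke), and performs exactly your case analysis giving $\theta(\rho(b))+\theta(\rho(b)^{-1}) = 0$, $2\pi$, or $4\pi$. Your extra discussion of how to realize the prescribed completions along $b$ and $b'$ is more detail than the paper provides, but it does not change the argument.
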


\begin{proof}
By additivity of the volume, we have $\Vol(\rho) = \Vol(\rho')$. Therefore,
\[\euler(\rho') = \euler(\rho) + \frac{1}{2\pi}\left(\theta(\rho'(b)) + \theta(\rho'(b'))\right)~.\]
One has $\rho'(b') = \rho'(b)^{-1}$ and therefore
\begin{displaymath}
\begin{array}{rcll}
\theta(\rho'(b)) + \theta(\rho'(b')) & = & 4\pi & \textrm{if $\rho'(b)$ is the identity}\\
 \ & = & 2\pi & \textrm{if $\rho'(b)$ is elliptic or parabolic}\\
 \ & = & 0 & \textrm{if $\rho'(b)$ is hyperbolic.}
 \end{array}
\end{displaymath}
\end{proof}

Let us now prove Proposition \ref{p: reduction} for $\Sigma_{0,n}$ by induction on $n$. 


\begin{proof}[Proof of Proposition \ref{p: reduction} in the genus $g=0$ case]  

We decompose $\Sigma = \Sigma_{0,n}$ as the union of $\Sigma'= \Sigma_{0,k+1}$ and $\Sigma''= \Sigma_{0,n-k+1}$, where some boundary curve $b'$ of $\Sigma_{0,k+1}$ is glued with $b''^{-1}$ for some boundary curve $b''$ of $\Sigma_{0,n-k+1}$. Denote by $\rho'$ the restriction of $\rho$ to $\pi_1(\Sigma_{0,k+1})$ and by $\rho''$ the restriction of $\rho$ to $\pi_1(\Sigma_{0,n-k+1})$.

By induction, we have $\euler(\rho') \leq k+1$ and $\euler (\rho'') \leq n-k+1$. We can now proceed to a case-by-case study:
\begin{itemize}
\item If $\euler (\rho') = |\chi(\Sigma') |+2$ and $\euler( \rho'' ) = |\chi(\Sigma'') |+2$, then by induction hypothesis they are both trivial. Hence $\rho$ is trivial and $\euler(\rho) = n$.
\item If $\euler(\rho') = |\chi(\Sigma') |+2$ and $\euler(\rho'') = |\chi(\Sigma'') |+1$ (or the converse) then, by induction hypothesis, $\rho'$ is trivial. Therefore $\rho'(b)$ is the identity. Hence
\[\euler(\rho) = \euler(\rho') \euler(\rho'') -2 = n-1~.\]
By induction, no boundary curve of $\Sigma_{0,k+1}$ and $\Sigma_{0,n-k+1}$ has hyperbolic image. Hence the same holds for $\Sigma_{0,n}$.
 Moreover, we have $\lfloor \sum_{i'} \theta_{i'}(\rho') \rfloor = |\chi(\Sigma')|+ 2$ and $\lfloor \sum_{i''} \theta_{i''}(\rho'') \rfloor = |\chi(\Sigma'')|+1$, so 
$$\lfloor \sum_{i} \theta_{i}(\rho) \rfloor =\lfloor \sum_{i'} \theta_{i'}(\rho') + \sum_{i''} \theta_{i''}(\rho'')  - 2 \rfloor = n-1~,$$ 
proving \eqref{eq: bound from above}. 
\item If $\euler(\rho') = |\chi(\Sigma') |+1$ and $\euler(\rho'') = |\chi(\Sigma'') |+1$, then $\rho'(b)$ is not hyperbolic. If $\rho'(b)$ is the identity, then
\[\euler(\rho) = \euler(\rho') \euler(\rho'') -2 = n-2~.\]
Otherwise, we have
\[\euler(\rho) \leq \euler(\rho') \euler(\rho'') -1 = n-1~.\]
Like in the previous case, no boundary curve of $\Sigma$ is sent to a hyperbolic element. Finally, since $\rho(b'), \rho(b'')$ are not hyperbolic, we get $\theta(\rho(b')) + \theta(\rho(b'')) = 2\pi$, and   
\begin{eqnarray*}
\sum_{i} \theta_{i}(\rho) & = & \sum_{i'} \theta_{i'}(\rho') + \sum_{i''} \theta_{i''}(\rho'')  - 1 \\
& \geq & (|\chi(\Sigma')| + 1) + (|\chi (\Sigma'')|+ 1) - 1 = |\chi (\Sigma)| + 1~,
\end{eqnarray*}
proving \eqref{eq: bound from above}.  
\item If $\euler(\rho') = |\chi(\Sigma') |+2$ and $\euler(\rho'') \leq |\chi(\Sigma'')|$ (or the converse), then $\rho(b')$ is the identity. Hence
\[\euler(\rho) = \euler(\rho') \euler(\rho'') -2 \leq n-2~.\]
\item If $\euler(\rho') = |\chi(\Sigma') |+1$ and $\euler(\rho'') \leq |\chi(\Sigma'')|$ (or the converse), then $\rho(b')$ is not hyperbolic. Hence
\[\euler(\rho) \leq \euler(\rho') \euler(\rho'') -1 \leq n-2~.\]
\item Finally, if $\euler(\rho') \leq |\chi(\Sigma') |$ and $\euler(\rho'') \leq |\chi(\Sigma'')|$, then
\[\euler(\rho) \leq \euler(\rho') \euler(\rho'') \leq n-2~.\]
\end{itemize}
\end{proof}

We can now prove Proposition \ref{p: reduction} in the higher genus case. Note that
\[ \frac{1}{2\pi} \Theta(\rho) = \frac{1}{2\pi}\sum_i \theta_i(\rho) \leq n \leq 2g-2+n = |\chi(\Sigma_{g,n})|\]
as soon as $g\geq 1$. Therefore, Proposition \ref{p: reduction} when $g \geq 1$ reduces to the classical Milnor-Wood inequality
\[\euler(\rho) \leq |\chi (\Sigma_{g,n}) |~.\]


\begin{proof}[Proof of Proposition \ref{p: reduction} if $g>0$]
We argue by induction on $g$.
\begin{itemize}
\item[$g=1$.]
The surface $\Sigma_{1,n}$ is obtained from $\Sigma_{0,n+2}$ by gluing together $b$ and ${b'}^{-1}$, for two boundary curves $b$ and $b'$. Denote by $\rho'$ the restriction of $\rho$ to $\pi_1(\Sigma_{0,n+2})$. If $\euler(\rho') = n+2$, then $\rho'$ is trivial, hence $\rho$ is trivial and $\euler(\rho) = n$. If $\euler(\rho') = n+1$, then $\rho'(b)$ is not hyperbolic. Hence
\[\euler(\rho) \leq \euler(\rho')-1 = n~.\]
Finally, if $\euler(\rho') \leq n$ then $\euler(\rho) \leq n$.\\

\item[$g\geq 2$.]
The surface  $\Sigma_{g,n}$ is obtained from $\Sigma_{g-1,n+2}$ by gluing together two boundary curves $b$ and $b'$. Denote by $\rho'$ the restriction of $\rho$ to $\pi_1(\Sigma_{g-1,n+2})$. By induction hypothesis, we have
\[\euler(\rho') \leq 2(g-1)-2 + n+2 = 2g-2+n~,\]
hence
\[\euler(\rho) \leq \euler(\rho') \leq 2g-2+n~.\]
\end{itemize}
\end{proof}

\section{Super-maximal representations}

\subsection{Definition and examples} Super-maximal representations are representations whose Euler class violates the classical Milnor-Wood inequality. It happens only in the following situation

\begin{defi}
A representation $\rho: \pi_1(\Sigma_{g,n}) \to G$ is called \emph{super-maximal} if $g=0$ and $\euler (\rho) = n-1$ ou $\euler (\rho)= n$.
\end{defi}

As we saw in Proposition \ref{p: reduction}, super-maximal representations have Euler class $n-1$, except for the trivial representation which has Euler class $n$.


\subsection{Super-maximal representations are ``totally non hyperbolic''}

A first important fact about super-maximal representations is that they send every simple closed curve to a non hyperbolic element.

\begin{prop} \label{p:SuperMaxElliptic}
Let $\rho: \pi_1(\Sigma_{0,n}) \to G$ be a representation of Euler class $\euler(\rho) = n-1$ or $n$. Then, for any element $\gamma$ in $\pi_1(\Sigma_{0,n})$ freely homotopic to  a simple closed curve, $\rho(\gamma)$ is not hyperbolic.
\end{prop}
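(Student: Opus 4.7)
The plan is to proceed by contradiction and to split the argument according to whether $\gamma$ is non-peripheral or peripheral, combining the gluing formula of the preceding subsection with the refined Milnor--Wood inequality of Proposition \ref{p: reduction}. Since $\euler(\rho) = n$ forces $\rho$ to be trivial, I may assume $\euler(\rho) = n-1$ throughout.

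For the non-peripheral case, I would use that, $\Sigma_{0,n}$ having genus zero, an essential non-peripheral simple closed curve $\gamma$ separates the surface into $\Sigma_{0,k+1}$ and $\Sigma_{0,n-k+1}$ with $2\leq k \leq n-2$. If $\rho(\gamma)$ were hyperbolic, the gluing formula would give $\euler(\rho) = \euler(\rho') + \euler(\rho'')$, where $\rho'$, $\rho''$ denote the restrictions to the two sides, each having $\gamma$ as a hyperbolic peripheral. Proposition \ref{p: reduction} then bounds $\euler(\rho') \leq k-1$ and $\euler(\rho'') \leq n-k-1$, so $\euler(\rho) \leq n-2$, contradicting the assumption.

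For the peripheral case $\gamma \sim c_i$ with $\rho(c_i)$ hyperbolic, I would first extract consequences from super-maximality: $\theta_i(\rho) = 0$ combined with $\Theta(\rho) \geq 2\pi(n-1)$ (from \eqref{eq: restriction}) and $\theta_j \leq 2\pi$ forces $\theta_j(\rho) = 2\pi$ for every $j \neq i$, meaning each $\rho(c_j)$ is either the identity or a negative parabolic. When $n = 3$, the pair-of-pants analysis carried out earlier already gives $\euler(\rho) \leq 1 < n-1$ as soon as one peripheral is hyperbolic, so this situation is ruled out directly. For $n \geq 4$, I would introduce an auxiliary simple closed curve $\alpha$ enclosing $p_i$ and $p_j$ for some $j \neq i$. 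Then $\alpha$ is non-peripheral, so the previous case applies and $\rho(\alpha)$ — which is conjugate to $\rho(c_i)\rho(c_j)$ — is not hyperbolic. In particular $\rho(c_j) \neq \Id$ (otherwise $\rho(\alpha)$ would be hyperbolic), so every $\rho(c_j)$ with $j \neq i$ is actually a negative parabolic.

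To conclude, I would cut $\Sigma_{0,n}$ along $\alpha$ into a pair of pants $\Sigma'$ (peripherals $c_i$, $c_j$, $\alpha^{-1}$) and a subsurface $\Sigma'' \cong \Sigma_{0,n-1}$ (new peripheral $\alpha$), and apply the gluing formula. Note that $\rho(\alpha) \neq \Id$, since otherwise $\rho(c_j) = \rho(c_i)^{-1}$ would be hyperbolic; so $\rho(\alpha)$ is parabolic or elliptic and the gluing formula reads $\euler(\rho) = \euler(\rho') + \euler(\rho'') - 1$. Proposition \ref{p: reduction} bounds $\euler(\rho') \leq |\chi(\Sigma_{0,3})| = 1$ (the pair of pants has the hyperbolic peripheral $c_i$) and $\euler(\rho'') \leq |\chi(\Sigma_{0,n-1})| + 2 = n-1$; equality in $\euler(\rho) = n-1$ would then force $\euler(\rho'') = n-1$, which by Proposition \ref{p: reduction} makes $\rho''$ trivial and hence $\rho(\alpha) = \Id$, a contradiction. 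The main obstacle will be precisely this peripheral case, where the naive constraints from super-maximality do not directly rule out a single hyperbolic peripheral; the decisive idea is to exploit an auxiliary enclosing curve in order to bootstrap back to the non-peripheral bound.
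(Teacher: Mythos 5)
Your proof is correct and follows essentially the same route as the paper: cut along the non-peripheral curve and apply the refined Milnor--Wood inequality of Proposition \ref{p: reduction} to the two pieces, each of which acquires $\gamma$ as a hyperbolic boundary curve. Your treatment of the peripheral case is correct but unnecessarily elaborate, since the clause of Proposition \ref{p: reduction} that you already invoke in the non-peripheral case (to obtain $\euler(\rho')\le k-1$) states directly that a representation with $\euler(\rho)>|\chi(\Sigma_{0,n})|$ has no hyperbolic peripheral, which is exactly how the paper disposes of that case in one line.
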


\begin{proof}
Let $\gamma$ be a simple closed curve in $\Sigma_{0,n}$. If $\gamma$ is freely homotopic to a boundary curve, then $\rho(\gamma)$ is not hyperbolic, as part of Proposition \ref{p: reduction}. Othewise, $\gamma$ cuts $\Sigma_{0,n}$ into two surfaces $\Sigma'$ and $\Sigma''$. We saw in the demonstration of Proposition \ref{p: reduction} that the restrictions of $\rho$ to $\pi_1(\Sigma')$ and $\pi_1(\Sigma'')$ are both super-maximal, and therefore that the images of the boundary curves of $\Sigma'$ by $\rho$ (and in particular $\rho(\gamma)$) are non hyperbolic.
\end{proof}

\begin{rmk}
Shinpei Baba observed that the representations lying in the Benedetto--Goldman compact components \cite{BenedettoGoldman} are totally elliptic. This remark gave us the idea that super-maximal representations should form compact components of relative character varieties.
\end{rmk}

\subsection{Domination} As a corollary, we obtain that $\rho$ is dominated by any Fuchsian representation:

\begin{coro} \label{c:DominationSuperMax}
Let $\rho: \pi_1(\Sigma_{0,n}) \to G$ be a representation of Euler class $n-1$ or $n$. Then, for any Fuchsian representation $j: \pi_1(\Sigma_{0,n}) \to G$, there exists a $1$-Lipschitz $(j,\rho)$-equivariant map from $\H$ to $\H$.
\end{coro}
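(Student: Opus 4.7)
The plan is to deduce the corollary directly from Proposition \ref{p:SuperMaxElliptic} via the comparison criterion for equivariant Lipschitz maps established by Guéritaud and Kassel in \cite{GK}. That criterion asserts, roughly, that if $j : \pi_1(\Sigma_{0,n}) \to G$ is Fuchsian (of finite covolume), and $\rho$ is any other representation, then the existence of a $(j,\rho)$-equivariant $1$-Lipschitz map $\H \to \H$ is equivalent to a comparison of marked translation length spectra $L_\rho \leq L_j$, where it suffices to test this inequality on a sufficiently rich family of conjugacy classes. For a punctured sphere, this family can be taken to be the simple closed curves (including the peripheral ones), since every free homotopy class decomposes geometrically along pair-of-pants pieces of any pants decomposition.

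First, I would reduce the problem to a translation length estimate. For any simple closed curve $\gamma$ in $\Sigma_{0,n}$, Proposition \ref{p:SuperMaxElliptic} asserts that $\rho(\gamma)$ is non-hyperbolic, hence $L_\rho(\gamma) = l(\rho(\gamma)) = 0$. On the other hand, $L_j(\gamma) \geq 0$ for any representation $j$, so the comparison
\[L_\rho(\gamma) \leq L_j(\gamma)\]
is trivially satisfied for every simple closed curve, and in particular for the peripherals: the latter is consistent with Proposition \ref{p: reduction}, which rules out hyperbolic peripheral images in the super-maximal regime.

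Next, I would invoke \cite{GK} to upgrade this spectral comparison to the existence of a $1$-Lipschitz equivariant map. Concretely, Guéritaud--Kassel construct the desired map by taking the infimum, over the $j$-orbit of a base point, of the appropriately translated Busemann-type functions, and show via a maximally stretched lamination argument that this infimum is realised and $1$-Lipschitz as soon as the spectral domination holds on the relevant set of curves. Since $j$ is Fuchsian and the action of $\pi_1(\Sigma_{0,n})$ on $\H$ via $j$ is geometrically finite, their framework applies directly.

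The main obstacle is the handling of the cusps of $j$, which might a priori require more than simple-closed-curve domination. The point is that Proposition \ref{p:SuperMaxElliptic} already controls the peripheral translation lengths (they vanish), so no translation length can escape to infinity along the cusps of the $j$-quotient; this is the input needed to make the Guéritaud--Kassel argument go through in the finite-volume setting. Once this verification is done, the corollary follows at once.
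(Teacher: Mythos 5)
Your proof is correct and follows essentially the same route as the paper: both invoke the Gu\'eritaud--Kassel criterion to reduce the existence of a $1$-Lipschitz $(j,\rho)$-equivariant map to the length-spectrum comparison $L_\rho(\gamma) \leq L_j(\gamma)$ over simple closed curves, and both observe that this holds trivially because Proposition~\ref{p:SuperMaxElliptic} forces $L_\rho(\gamma) = 0$ for every simple closed curve. The extra discussion of Busemann-type constructions and cusp behaviour is reasonable background but not needed beyond what the citation of \cite{GK} already supplies.
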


\begin{proof}
For $g\in G$, let $L(g)$ denote the translation length of $g$ (seen as an isometry of $\H^2$). According to the work of Gu\'eritaud--Kassel \cite{GK}, in order to obtain the conclusion, it is enough to know that
\[L_\rho(\gamma)) \leq L_j(\gamma))\]
for every simple closed curve $\gamma$. This is obviously true since, by Proposition~\ref{p:SuperMaxElliptic}, $L_\rho(\gamma)) = 0$ for every simple closed curve $\gamma$.
\end{proof}

As a consequence, we obtain that the length spectrum $L_\rho$ of any super-maximal representation is bounded by the following interesting function 
\[C (\gamma ) = \inf\{L_j(\gamma)), j:\pi_1(\Sigma_{0,n}) \to G \textrm{ Fuchsian}\}~.\]
It is invariant by conjugation and by the braid group, and measures a certain complexity of the corresponding element of $\pi_1(\Sigma_{0,n})$. It would be interesting to understand this function in more details. See Basmajian \cite{Basmajian13} for related results.

\subsection{Compacity of the space of super-maximal representations}

Corollary \ref{c:DominationSuperMax} provides a uniform control on all super-maximal representations, from which we can prove that the space of super-maximal representations is compact.

Let $\Hom(\Sigma_{0,n}, G)$ denote the space of representations of $\pi_1(\Sigma_{0,n})$ into $G$ and $\Rep(\Sigma_{0,n}, G)$ its quotient by the action of $G$ by conjugation. The natural topology of $\Hom(\Sigma_{0,n}, G)$ induces a non-Hausdorff topology on $\Rep(\Sigma_{0,n}, G)$. 

\begin{prop} \label{p:CompacitySuperMax}
The space of super-maximal representations is a compact subset of $\Rep(\Sigma_{0,n}, G)$.
\end{prop}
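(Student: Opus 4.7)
The plan is to show first that the set $SM \subset \Hom(\pi_1(\Sigma_{0,n}),G)$ of super-maximal representations is closed in $\Hom$, and then that its image in $\Rep$ is sequentially compact. The latter will follow by combining the Lipschitz domination of Corollary~\ref{c:DominationSuperMax} with the properness of the displacement function on $G$.

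For closedness in $\Hom$, I would first verify that the function $\theta:G\to \R_+$ is upper semi-continuous: its only discontinuities are upward jumps, occurring when a sequence of elliptic, hyperbolic, or positive parabolic elements degenerates to the identity or to a negative parabolic, and in every such case the value at the limit dominates $\limsup\theta(g_k)$. Consequently $\Theta$ is upper semi-continuous on $\Hom$, and since the volume is a continuous function of $\rho$ by \cite{BIW}, so is the relative Euler class $\euler=\frac{1}{2\pi}(\Vol+\Theta)$. Combining this with the upper bound $\euler(\rho)\leq n$ supplied by Theorem~\ref{t:MilnorWood}, the set $SM=\{\euler\geq n-1\}$ is the preimage of a closed half-line under an upper semi-continuous function, hence is closed in $\Hom$.

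For compactness in $\Rep$, fix a Fuchsian representation $j:\pi_1(\Sigma_{0,n})\to G$, a finite generating set $F$ of $\pi_1(\Sigma_{0,n})$, and a basepoint $x_0\in \H$. Given any $\rho\in SM$, Corollary~\ref{c:DominationSuperMax} produces a $1$-Lipschitz $(j,\rho)$-equivariant map $f_\rho:\H\to\H$. Replacing $\rho$ by a suitable conjugate, we may arrange $f_\rho(x_0)=x_0$. Then for every $\gamma\in F$,
\[
d(x_0,\rho(\gamma)x_0)=d(f_\rho(x_0),f_\rho(j(\gamma)x_0))\leq d(x_0,j(\gamma)x_0)~,
\]
so $\rho(\gamma)$ lies in the compact subset $K_\gamma=\{g\in G:d(x_0,gx_0)\leq d(x_0,j(\gamma)x_0)\}$ of $G$. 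Thus every $\rho\in SM$ admits a conjugate whose image on $F$ lies in the compact subset $\prod_{\gamma\in F}K_\gamma$ of $G^F$. Any sequence in $SM$ therefore has a subsequence whose chosen representatives converge in $\Hom$; the limit remains in $SM$ by the closedness established above, and its image in $\Rep$ is the limit of the original sequence in the quotient topology.

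The main obstacle, beyond the elementary case analysis for $\theta$, is handling compactness correctly in the non-Hausdorff quotient $\Rep$. The trick is to carry out the whole compactness argument upstairs in $\Hom$, exploiting the uniform Lipschitz control from Corollary~\ref{c:DominationSuperMax} to extract good representatives, and only then descend to the quotient.
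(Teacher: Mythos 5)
Your proposal is correct and follows essentially the same route as the paper: both arguments hinge on the $1$-Lipschitz $(j,\rho)$-equivariant map from Corollary~\ref{c:DominationSuperMax}, a normalization at a basepoint by conjugation, extraction of a convergent subsequence, and the upper semi-continuity of $\euler$ to see that the limit is still super-maximal. The only (harmless) difference is that the paper extracts the limit by applying Ascoli's theorem to the equivariant maps $f_n$ themselves, whereas you bound the displacement $d(x_0,\rho(\gamma)x_0)$ of a finite generating set directly and pass to a limit in $G^F$.
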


\begin{proof}
Let $(\rho_n)_{n\in \N}$ be a sequence of super-maximal representations. We fix a Fuchsian representation $j$. By Corollary \ref{c:DominationSuperMax}, we can find a sequence of $1$-Lipschitz maps $f_n: \H \to \H$ such that $f_n$ is $(j,\rho_n)$-equivariant. Up to conjugating each $\rho_n$ and composing each $f_n$ by an isometry of $\H$, we can assume that each $f_n$ fixes a given base point. By Ascoli's theorem, up to extracting a subsequence, $f_n$ converges uniformly on every compact set to a $1$-Lipschitz map $f_\infty$. This map $f_\infty$ is $(j,\rho_\infty)$-equivariant for some representation $\rho_\infty$ and we have
\[\rho_n \underset{n\to +\infty}{\to} \rho_\infty\]
in $\Rep(\Sigma_{0,n}, G)$. 

Finally, since the function $\euler: \Rep(\Sigma_{0,n}, G) \to \R$ is upper semi-continuous, the limit $\rho_\infty$ is still super-maximal. We have thus proved that the set
\[\{ \rho \in \Rep(\Sigma_{0,n}, G) \to \R \mid \euler(\rho) \geq n-1\}\]
is sequentially compact, hence compact. \end{proof}

\subsection{Compact components in relative character varieties} 

Recall that $\Rep_{\bf{\alpha}}^{SM}(\Sigma_{0,n}, G)$ denotes the set of super-maximal representations in the corresponding relative character variety $\Rep_{\bf{\alpha}}(\Sigma_{0,n}, G)$. 

\begin{prop}
If $2(n-1)\pi <\sum_{i=1}^n \alpha_i < 2n\pi$, then $\Rep_{\bf{\alpha}}^{SM}(\Sigma_{0,n}, G)$ forms a non-empty compact connected component of $\Rep_{\bf{\alpha}}(\Sigma_{0,n}, G)$.
\end{prop}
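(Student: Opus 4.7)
The plan is to establish three properties: that $\Rep_{\mathbf{\alpha}}^{SM}$ is non-empty, that it is compact, and that it is both open and closed in $\Rep_{\mathbf{\alpha}}$ (so that it is a union of connected components; actual connectedness will be subsumed by Theorem~\ref{t:SymplecticSuperMaximal}).

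For compactness, the key observation is that since each $\alpha_i$ lies in $(0, 2\pi)$, the fiber $\theta^{-1}(\alpha_i) \subset G$ is a single closed conjugacy class (the elliptics of rotation angle $\alpha_i$). Hence $\Rep_{\mathbf{\alpha}}$ is closed in $\Rep(\Sigma_{0,n}, G)$, and intersecting with the compact super-maximal locus of Proposition~\ref{p:CompacitySuperMax} yields compactness of $\Rep_{\mathbf{\alpha}}^{SM}$. For the open-closed claim, $\Theta(\rho)$ is identically equal to $\sum_i \alpha_i$ on $\Rep_{\mathbf{\alpha}}$, so that
\[\euler(\rho) = \frac{\Vol(\rho)}{2\pi} + \frac{1}{2\pi}\sum_i \alpha_i\]
there. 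Since $\Vol$ is continuous by \cite[Theorem 1]{BIW} and $\euler$ is integer-valued by \cite[Theorem 12]{BIW}, $\euler$ is a locally constant function on $\Rep_{\mathbf{\alpha}}$; the hypothesis $\sum \alpha_i < 2n\pi$ excludes the trivial representation, so the super-maximal locus reduces to the level set $\{\euler = n-1\}$, which is open and closed.

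For non-emptiness, I would construct a super-maximal representation with prescribed peripheral angles explicitly. Fix a pants decomposition of $\Sigma_{0,n}$ into $n-2$ pairs of pants via $n-3$ disjoint simple closed curves $\gamma_1, \ldots, \gamma_{n-3}$. The analysis in the proof of Proposition~\ref{p: reduction} shows that for any triple $(\theta_1, \theta_2, \theta_3) \in (0, 2\pi)^3$ with $\theta_1 + \theta_2 + \theta_3 > 4\pi$, there is a representation of $\pi_1(\Sigma_{0,3})$ with $\theta_i(\rho) = \theta_i$ and $\euler(\rho) = 2$, obtained by doubling an anti-clockwise oriented hyperbolic triangle of angles $\pi - \theta_i/2$ (which are positive, strictly less than $\pi$, and sum to less than $\pi$ precisely when $\sum \theta_i > 4\pi$). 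I would assign to each internal curve $\gamma_j$ a rotation angle $\beta_j \in (0, 2\pi)$ on one side, automatically inducing the angle $2\pi - \beta_j$ on the other side (as the two assigned elements are inverse to one another). Constructing the negative triangle representation on each pants and gluing along the $\gamma_j$ by a suitable conjugation in $G$ so that the two elliptic rotations match then produces the desired representation of $\pi_1(\Sigma_{0,n})$.

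The main obstacle is the combinatorial step: one must choose the angles $(\beta_j)_{j=1}^{n-3}$ so that every pair of pants has its three peripheral angles summing strictly above $4\pi$. The hypothesis $\sum \alpha_i > 2(n-1)\pi$ is exactly what makes this possible, since the sum of all pants-totals equals $\sum \alpha_i + 2(n-3)\pi$, which strictly exceeds $4(n-2)\pi$. To upgrade this global inequality to a compatible local distribution, I would argue by induction on $n$ along the dual trivalent tree of the pants decomposition: peel off a leaf pants containing two cusps, use the one-parameter freedom in the angle $\beta_j$ on its bordering curve to absorb the appropriate share of the excess, and apply the induction to the remaining sphere with $n-1$ punctures, whose new peripheral data still satisfies the analog of the hypothesis.
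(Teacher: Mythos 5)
Your proof is correct and follows essentially the same route as the paper: compactness and open-closedness are obtained exactly as in the paper (closedness of $\Rep_{\mathbf{\alpha}}$ intersected with Proposition~\ref{p:CompacitySuperMax}, then continuity plus integrality of $\euler$ on the relative character variety), and non-emptiness is obtained by the same gluing of negative triangle-group representations along a pants decomposition that the paper carries out in Subsection~\ref{ss: Delzantpolytope} (Lemma~\ref{l:InequalitiesMomentMap}), where your inductive distribution of the angle excess along the dual tree corresponds to the paper's explicit chain of inequalities $\beta_1\geq\bar\alpha_1+\bar\alpha_2$, $\beta_i-\beta_{i-1}\geq\bar\alpha_{i+1}$, $\beta_{n-3}\leq 2\pi-\bar\alpha_{n-1}-\bar\alpha_n$, whose solvability is equivalent to $\sum_i\alpha_i>2(n-1)\pi$. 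Like the paper, you correctly defer connectedness to the moment-map analysis, so nothing essential is missing.
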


\begin{proof}
Since $\Rep_{\bf{\alpha}}(\Sigma_{0,n}, G)$ is closed in $\Rep(\Sigma_{0,n}, G)$, its intersection with the set of super-maximal representations is compact by Proposition \ref{p:CompacitySuperMax}. Since none of the $\alpha_i$ is equal to $0$ or $2\pi$, the Euler class is continuous in restriction to $\Rep_{\bf{\alpha}}(\Sigma_{0,n}, G)$. Since it takes integral values, the subset of super-maximal representations is a union of connected components. 

It remains to prove that $\Rep_{\bf{\alpha}}^{SM}(\Sigma_{0,n}, G)$ is non-empty and connected. We postpone this to section \ref{ss: Delzantpolytope}.
\end{proof}

\section{Symplectic geometry of super-maximal components}

\subsection{The Goldman symplectic structure on (relative) character varieties}

Goldman constructed in \cite{GoldmanSymplecticStructure} a natural symplectic structure on the character variety $\chi_G(\Sigma)$ of the fundamental group of a closed connected oriented surface $\Sigma$ into $G$, and in fact into any semi-simple Lie group. Moreover, he found in \cite{Goldmanduality} a duality between conjugacy invariant functions on $G$ and certain ``twisting'' deformations of representations.

More precisely, let $F: G \to \R$ be a fonction invariant by conjugation. Recall that there is a natural non-degenerate bilinear form $\Kill_G$ on the Lie algebra $\g$ which is invariant under the adjoint action of $G$: the Killing form. At a point $g\in G$ where $F$ is $\mathcal{C}^1$, we define $\delta_g F$ as the vector in $\g$ such that
\[ \d F_g(g\cdot v) = \Kill_G(\delta_g F, v)\]
for all $v\in \g$.
Because $F$ is invariant by conjugation, $\delta_g F$ is centralized by $g$.

Now, let $\rho: \pi_1(\Sigma) \to G$ be a representation. Let $b$ denote a simple closed curve in $\Sigma$ which is not homotopic to a boundary curve. If $b$ is separating, then it cuts $\Sigma$ into two surfaces $\Sigma'$ and $\Sigma''$ and we can write
\[\pi_1(\Sigma) = \pi_1(\Sigma') * \pi_1(\Sigma'')/b' \sim b''~.\]
If $b$ is not separating, cutting along $b$ gives a compact surface $\Sigma'$ and we can write
\[\pi_1(\Sigma) = \pi_1(\Sigma') * \langle u \rangle/b_{left}\sim u b_{right}u^{-1}~.\]

Since $\rho(b)$ centralizes $\delta_{\rho(b)} F$, we can ``twist'' the representation $\rho$ along $b$ and define a representation $\Phi_{F,b,t}(\rho)$ by
\[\begin{array}{lccc}
\Phi_{F,b,t}(\rho): & \gamma \in \pi_1(\Sigma') & \mapsto & \rho(\gamma) \\
                  & \gamma \in \pi_1(\Sigma'')& \mapsto & \exp(t \delta_{\rho(b)} F) \rho(\gamma) \exp(-t \delta_{\rho(b)} F)
\end{array}\]
if $b$ is separating and
\[\begin{array}{lccc}
\Phi_{F,b,t}(\rho): & \gamma \in \pi_1(\Sigma') & \mapsto & \rho(\gamma) \\
                  & u                         & \mapsto & u \exp(t \delta_{\rho(b)} F)
\end{array}\]
if $b$ is non-separating. On can prove that $\Phi_{F,b,t}$ induces a well-defined flow on the character variety of $\Sigma$.

\begin{CiteThm}[Goldman] \label{t:Goldmanduality}
Let $F$ be a function of class $\mathcal{C}^1$ on $G$ invariant by conjugation. If $b$ is a simple closed curve in $\Sigma$. Denote by $\mathbf{F}_b$ the function on $\chi_G(\Sigma)$ defined by
\[\mathbf{F}_b(\rho) = F(\rho(b))~.\]
Then $\mathbf{F}_b$ is $\mathcal{C}^1$ and its Hamiltonian flow (with respect to the Goldman symplectic form) is the flow $\left(\Phi_{F,b,t}\right)_{t\in \mathbb{R}}$.
\end{CiteThm}

This generalizes to relative character varieties of surfaces with boundary (see for instance \cite{Goldman06}).

\subsection{A Hamiltonian action of $\left(\R/\pi \mathbb{Z}\right)^{n-3}$} \label{ss: Hamiltonian action}

We consider the decomposition of $\Sigma_{0,n}$ into pairs of pants given by Figure \ref{fig:POPDecomposition}.

\begin{center}
\begin{figure} 
\includegraphics[width=12cm]{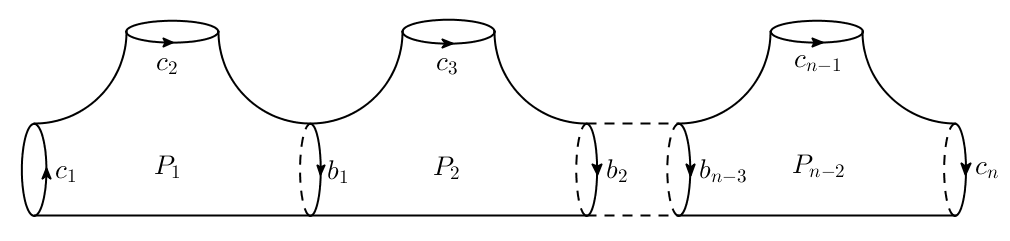}
\caption{A pair-of-pants decomposition of $\Sigma_{0,n}$. \label{fig:POPDecomposition}}
\end{figure}
\end{center}

If $\rho$ is a representation of $\pi_1(\Sigma_{0,n})$ into $G$, we note $\beta_i(\rho)= \theta(\rho(b_i))$. We also set $\bar{\alpha}_i = 2\pi - \alpha_i$.

\begin{lem}
If $\rho \in \Rep_{\bf{\alpha}}^{SM}(\Sigma_{0,n}, G)$, then, for all $1\leq i \leq n-3$, we have
\[ \sum_{k=1}^{i+1} \bar{\alpha}_k \leq \beta_i(\rho) \leq 2\pi - \sum_{k=i+2}^n \bar{\alpha}_k~.\]
In particular, $\rho(b_i)$ is elliptic.
\end{lem}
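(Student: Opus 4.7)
The plan is to restrict $\rho$ to each of the two subsurfaces obtained by cutting $\Sigma_{0,n}$ along $b_i$, show that both restrictions are themselves super-maximal, and then apply the refined Milnor--Wood inequality (Theorem~\ref{t:MilnorWood}) separately on each side. Let $\Sigma'_i$ be the component containing the punctures $c_1,\dots,c_{i+1}$ with $b_i$ as its extra boundary, and $\Sigma''_i$ the component containing the punctures $c_{i+2},\dots,c_n$ with $b_i^{-1}$ as its extra boundary; these are spheres with $|\chi(\Sigma'_i)|=i$ and $|\chi(\Sigma''_i)|=n-i-2$. Denote the respective restrictions by $\rho'_i$ and $\rho''_i$. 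The additivity formula established in Section~\ref{s: MilnorWood} gives
\[ \euler(\rho) \;=\; \euler(\rho'_i) + \euler(\rho''_i) - c, \]
where $c\in\{0,1,2\}$ records whether $\rho(b_i)$ is hyperbolic, elliptic or parabolic, or the identity.

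The first key step is to pin down $c=1$ and to upgrade $\rho'_i$ and $\rho''_i$ to super-maximal representations. Since every $\alpha_k$ lies in $(0,2\pi)$, no $\rho(c_k)$ is the identity, so neither restriction is trivial, and the equality case of Proposition~\ref{p: reduction} forces $\euler(\rho'_i)\leq i+1$ and $\euler(\rho''_i)\leq n-i-1$; likewise $\rho$ itself is non-trivial, so $\euler(\rho)=n-1$. One then rules out $c=0$ by Proposition~\ref{p:SuperMaxElliptic} (every simple closed curve is sent to a non-hyperbolic element), and rules out $c=2$ because it would require $\euler(\rho'_i)+\euler(\rho''_i)=n+1$, strictly above the sum of the two upper bounds. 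Hence $c=1$, and equality is forced throughout: $\euler(\rho'_i)=i+1$ and $\euler(\rho''_i)=n-i-1$, so both restrictions are super-maximal.

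Now that $\euler(\rho'_i)=i+1>|\chi(\Sigma'_i)|$, the refined Milnor--Wood bound applied to $\rho'_i$ must fall on the right branch of the $\sup$, yielding
\[ \sum_{k=1}^{i+1}\alpha_k + \beta_i \;=\; \Theta(\rho'_i) \;\geq\; 2\pi(i+1), \]
which rearranges immediately to $\beta_i\geq \sum_{k=1}^{i+1}\bar{\alpha}_k$. The symmetric argument applied to $\rho''_i$ gives $\theta(\rho(b_i^{-1}))\geq \sum_{k=i+2}^n\bar{\alpha}_k$; since $\rho(b_i)$ is elliptic or parabolic one has $\beta_i+\theta(\rho(b_i^{-1}))=2\pi$, so this becomes $\beta_i\leq 2\pi-\sum_{k=i+2}^n\bar{\alpha}_k$. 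Finally, since each $\bar{\alpha}_k$ is strictly positive, the two bounds together force $\beta_i\in(0,2\pi)$, which rules out the parabolic cases and leaves $\rho(b_i)$ elliptic.

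The only non-routine point in the argument is the case analysis of the second paragraph that upgrades both restrictions to super-maximal representations; once this is in hand, the two Milnor--Wood inequalities combine symmetrically to give both the lower and the upper bound on $\beta_i$, and ellipticity of $\rho(b_i)$ follows from the strict positivity of the $\bar{\alpha}_k$'s.
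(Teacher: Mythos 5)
Your proof is correct and follows essentially the same route as the paper: cut along $b_i$, show that both restrictions are super-maximal (hence in particular that $\rho(b_i)$ is neither hyperbolic nor the identity), and apply the refined Milnor--Wood inequality of Proposition~\ref{p: reduction} to each piece to get the two bounds on $\beta_i$. Your case analysis making explicit why the restrictions must have Euler classes $i+1$ and $n-i-1$ is somewhat more detailed than the paper's, which simply invokes the induction in the proof of Proposition~\ref{p: reduction}, but the argument is the same.
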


\begin{proof}
The curve $b_i$ cuts $\Sigma_{0,n}$ into two surfaces $\Sigma'_{0,i+2}$ and $\Sigma''_{0,n-i}$. Let $\rho'$ and $\rho''$ denote respectively the restrictions of $\rho$ to $\pi_1(\Sigma')$ and $\pi_1(\Sigma'')$. Since none of the $\alpha_i$ is equal to $2\pi$, neither $\rho'$ nor $\rho''$ are trivial. Since $\rho$ is super-maximal, $\rho'$ and $\rho''$ are also super-maximal. Applying Proposition \ref{p: reduction} to $\rho'$, we get
\[\beta_i(\rho) + \sum_{k=1}^{i+1} \alpha_k \geq 2\pi(i+1)~.\]
Applying Proposition \ref{p: reduction} to $\rho''$, we get
\[\bar{\beta}_i(\rho) + \sum_{k=i+2}^n \alpha_k \geq 2\pi(n-i-1)~,\]
where $\bar{\beta}_i(\rho) = \theta(\rho(b_i)^{-1})$.

If $\rho(b_i)$ were the identity, then we would get
\[\euler(\rho) = \euler(\rho') \euler(\rho'')-2 = |\chi(\Sigma_{0,n})|\]
and $\rho$ would not be super-maximal. 

Therefore, $\bar{\beta}_i(\rho) = 2\pi - \beta_i(\rho)$ and we get
\[ \sum_{k=1}^{i+1} \bar{\alpha}_k \leq \beta_i(\rho) \leq 2\pi - \sum_{k=i+2}^n \bar{\alpha}_k~.\]
In particular, $\rho(b_i)$ is elliptic.
\end{proof}

Since $\rho(b_i)$ is elliptic for all $\rho \in \Rep_\alpha ^{SM}  (\Sigma_{0,n}, G)$, The functions $\beta_i$ are $n-3$ well-defined smooth functions on $\Rep_\alpha ^{SM} (\Sigma_{0,n}, G)$.

\begin{prop}
The Hamiltonian flow associated to the function $\beta_i$ is $\pi$-periodic.
\end{prop}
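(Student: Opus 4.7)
The plan is to apply Goldman's duality theorem (Theorem~\ref{t:Goldmanduality}) to identify the Hamiltonian flow of $\beta_i$ with Goldman's twist flow $\Phi_{\theta, b_i, t}$ along $b_i$, and then read off the period by a direct computation in the centralizer of $\rho(b_i)$. By the previous lemma, $\rho(b_i)$ is elliptic and non-trivial for every $\rho \in \Rep_\alpha^{SM}(\Sigma_{0,n}, G)$, so $\theta$ is smooth near $\rho(b_i)$ and Theorem~\ref{t:Goldmanduality} applies. The twist flow fixes $\rho$ on $\pi_1(\Sigma'_{0,i+2})$ and conjugates it on $\pi_1(\Sigma''_{0,n-i})$ by $k_t = \exp(t \, \delta_{\rho(b_i)} \theta)$; since $G = \mathrm{PSL}(2, \mathbb R)$ has trivial center, the flow will be $T$-periodic at $\rho$ precisely when $k_T = e$.

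The second step is to identify $\{k_t\}_{t \in \mathbb R}$ as a circle subgroup of rotations around the fixed point $p_i$ of $\rho(b_i)$. Conjugation invariance of $\theta$ forces $\delta_{\rho(b_i)} \theta$ to lie in the Lie algebra of the centralizer $Z(\rho(b_i))$, which is the stabilizer $K_{p_i}$ of $p_i$ in $G$ --- a compact circle subgroup. Let $H_{p_i}$ denote its generator normalized so that $\exp(t H_{p_i})$ rotates $\H^+$ around $p_i$ by angle $2t$; in particular $\exp(\pi H_{p_i}) = e$. Writing $\delta_{\rho(b_i)} \theta = c\, H_{p_i}$ and testing the duality formula $\d \theta_g(g \cdot v) = \Kill_G(\delta_g \theta, v)$ at $v = H_{p_i}$ reduces the problem to a one-line calculation: the left-hand side equals $\frac{d}{dt}\big|_{t=0}\theta(\rho(b_i) \exp(t H_{p_i})) = 2$, while the right-hand side equals $c \cdot \Kill_G(H_{p_i}, H_{p_i})$. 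With the normalization of $\Kill_G$ implicit in Theorem~\ref{t:Goldmanduality} --- the one for which $\Kill_G(H_{p_i}, H_{p_i}) = -2$, i.e.\ $\Kill_G(X,Y) = \mathrm{tr}(XY)$ on $\mathfrak{sl}_2(\mathbb R)$ --- this yields $c = -1$, hence $k_\pi = \exp(-\pi H_{p_i}) = e$ and the flow is $\pi$-periodic.

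The main subtlety is pinning down the normalization of $\Kill_G$ so that $|c|$ comes out to $1$ (and not $2$ or $1/2$). This reflects the factor $2$ in $\theta(\exp(t H_{p_i})) = 2t$ between the natural parameter on $K_{p_i} \cong \mathbb R/\pi\mathbb Z$ and the rotation angle valued in $\mathbb R/2\pi\mathbb Z$, which in turn comes from the covering $\mathrm{SL}(2, \mathbb R) \to \mathrm{PSL}(2, \mathbb R)$. Once the correct bilinear form on $\g$ is fixed, everything is a formal consequence of Goldman's twist formula together with the fact that $\rho(b_i)$ is elliptic everywhere on $\Rep_\alpha^{SM}(\Sigma_{0,n}, G)$.
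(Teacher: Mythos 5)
Your argument is correct and follows essentially the same route as the paper: both invoke Goldman's duality theorem to identify the Hamiltonian flow of $\beta_i$ with the twist flow along $b_i$, and both reduce the claim to computing $\delta_g\theta$ at an elliptic element $g$, finding that it generates the rotation circle about the fixed point of $g$ with $\exp(\pi\,\delta_g\theta)$ trivial in $\mathrm{PSL}(2,\mathbb R)$. The only (cosmetic) difference is that the paper computes $\delta_g\theta$ by differentiating $\theta(g)=f(\Tr(g))$ directly in coordinates, whereas you pair against a normalized generator of the centralizer; both calculations rest on the same normalization of the invariant bilinear form as the trace form on $\mathfrak{sl}(2,\mathbb R)$, which the paper uses implicitly and which you rightly flag as the one genuine subtlety.
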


\begin{proof}
Recall that the function $\theta$ on $G$ associates to an elliptic element $g$ the rotation angle of $g$ in $\H^+$. The function $\theta$ is invariant by conjugation. Let us compute its gradient (with respect to the Killing metric) at the point
\[g_0 = \left(\begin{matrix} \cos(\theta_0/2) & - \sin(\theta_0/2) \\ \sin(\theta_0/2) & \cos(\theta_0/2)\end{matrix} \right)~.\]
We can write 
\[\theta(g) = f(\Tr(g))~,\]
where $f$ satisfies
\[f(2\cos(x/2)) = x~.\]

For $u\in \sl(2,\R)$, we have
\begin{eqnarray*}
\dt_{|t=0} f\left(\Tr(g_0 \exp(tu))\right) & = & f'\left(\Tr(g_0) \right) \Tr(g_0 u)\\
\ & = & f'(2 \cos(\theta_0/2)) \Tr\left( (g_0 - \Tr(g_0) \I_2)u\right)~.
\end{eqnarray*}
Since $g_0 - \Tr(g_0) \I_2 \in \sl(2,\R)$, we deduce that
\begin{eqnarray*}
\delta_{g_0} \theta & = & f'(2 \cos(\theta_0/2)) (g_0 - \Tr(g_0)\I_2)\\
& = & \left(\begin{matrix} 0 & -f'(2\cos(\theta_0/2)) \sin(\theta_0/2)\\f'(2\cos(\theta_0/2)) \sin(\theta_0/2) & 0\end{matrix}\right)\\
& = &  \left(\begin{matrix} 0 & 1 \\-1 & 0\end{matrix}\right)
\end{eqnarray*}
since $-f'(2\cos(\theta_0/2)) \sin(\theta_0/2) = \frac{\d}{\d x}_{|x = \beta_0} f(2\cos(x/2)) = 1$.

Therefore, the flow
\[t\mapsto \exp(t \delta_{g_0} \theta)\]
is $\pi$-periodic. 

Since every elliptic element is conjugate to some $g_0$, we obtain thanks to Theorem \ref{t:Goldmanduality} that the Hamiltonian flow
\[\Phi_i = \Phi_{\theta,b_i}\]
associated to the function $\beta_i: \rho \mapsto \theta(\rho(b_i))$ on $\Rep_{\bf{\alpha}}^{SM}(\Sigma_{0,n}, G)$ is $\pi$-periodic.
\end{proof}

Since the curves $b_i$ are pairwise disjoint, the functions $\beta_i$ Poisson commute and their Hamiltonian flows together provide an action of $(\R/\pi \Z)^{n-3}$ on $\Rep_\alpha (\Sigma_{0,n}, G)$.

\subsection{The Delzant polytope of the Hamiltonian action} \label{ss: Delzantpolytope}

In this subsection we prove Theorem \ref{t:SymplecticSuperMaximal}. 

By the work of Delzant \cite{Delzant88}, in order to understand the symplectic structure of the manifold $\Rep_{\bf{\alpha}}^{SM}(\Sigma_{0,n}, G)$, it is essentially enough to understand the image of the \emph{moment map}:
\[\function{\mathbf{\beta}}{\Rep_\alpha^{SM} (\Sigma_{0,n}, G)}{\R^{n-3}}{\rho}{\left(\beta_1(\rho), \ldots , \beta_{n-3}(\rho)\right)}~.\]
More precisely, Delzant proved the following:
\begin{CiteThm}[Delzant, \cite{Delzant88}]
Let $(M,\omega)$ and $(M',\omega')$ be two compact symplectic manifolds of dimension $2(n-3)$ provided with a Hamiltonian action of $(\R/\pi\Z)^{n-3}$. Assume that the moment maps $\mathbf{\beta}$ and $\mathbf{\beta}'$ of these actions have the same image (up to translation). Then there exists a symplectomorphism $\phi:(M,\omega) \to (M',\omega')$ that conjugates the actions of $(\R/\pi\Z)^{n-3}$.
\end{CiteThm}

\begin{lem} \label{l:InequalitiesMomentMap}
The moment map $\mathbf{\beta}$ of the Hamiltonian action of $\R/\pi\Z$ described in Subsection \ref{ss: Hamiltonian action} satisfies the following $n-2$ affine inequalities:
\begin{eqnarray}
\label{eq:IneqMP1} \beta_1  & \geq & \bar{\alpha}_1 + \bar{\alpha}_2~,\\
\label{eq:IneqMP2} \beta_i - \beta_{i-1} & \geq & \bar{\alpha}_{i+1},\quad 2\leq i \leq n-3~,\\
\label{eq:IneqMP3} \beta_{n-3} & \leq & 2\pi - \bar{\alpha}_{n-1} - \bar{\alpha}_n~.
\end{eqnarray}

Conversely, if $(x_1, \ldots, x_{n-3}) \in \R_+^{n-3}$ satisfies the inequalities \eqref{eq:IneqMP1}, \eqref{eq:IneqMP2}, \eqref{eq:IneqMP3} (when substituting $x_i$ to $\beta_i$), then the set of super-maximal representations $\rho$ satisfying $\beta_i(\rho) = x_i$ is non-empty and connected.
\end{lem}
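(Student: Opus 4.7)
The plan is to first extract the affine inequalities on $\mathbf{\beta}$ from Proposition \ref{p: reduction} applied to each pair of pants of the decomposition, and then to build a non-empty connected fiber over each polytope point by gluing uniquely-determined pants representations.

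For the direct inequalities, one observes that each curve $b_i$ separates $\Sigma_{0,n}$ into two subsurfaces, and that, by additivity of the Euler class and the Milnor--Wood inequality, the restriction of a super-maximal representation to any such subsurface is itself super-maximal. Applied to the three types of pants in the decomposition---namely $P_0$ with boundary $(c_1, c_2, b_1)$, the intermediate $P_i$ with boundary $(b_i^{-1}, c_{i+2}, b_{i+1})$, and $P_{n-3}$ with boundary $(b_{n-3}^{-1}, c_{n-1}, c_n)$---the inequality $\Theta(\rho|_{P}) \geq 4\pi$ coming from super-maximality of a pair of pants (Proposition \ref{p: reduction}), once rewritten in terms of $\beta_i$ and $\bar{\beta}_i = 2\pi - \beta_i$, yields directly the three families \eqref{eq:IneqMP1}, \eqref{eq:IneqMP2}, \eqref{eq:IneqMP3}.

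For the converse, the key input is that a super-maximal representation of $\Sigma_{0,3}$ with prescribed elliptic angles $(\gamma_1, \gamma_2, \gamma_3) \in (0, 2\pi)^3$ satisfying $\gamma_1 + \gamma_2 + \gamma_3 > 4\pi$ exists and is unique up to conjugation: the case analysis in Proposition \ref{p: reduction} identifies it with the holonomy of an anti-clockwise reflection group of a hyperbolic triangle in $\H^+$ with angles $\pi - \gamma_i/2$, and the triangle inequality $\sum(\pi - \gamma_i/2) < \pi$ is equivalent to $\sum \gamma_i > 4\pi$. Given $(x_1, \ldots, x_{n-3}) \in \R_+^{n-3}$ satisfying \eqref{eq:IneqMP1}--\eqref{eq:IneqMP3} strictly, we apply this to each pants of the decomposition with the prescribed boundary angles---$(\alpha_1, \alpha_2, x_1)$, $(2\pi - x_{i-1}, \alpha_{i+1}, x_i)$, and $(2\pi - x_{n-3}, \alpha_{n-1}, \alpha_n)$---the polytope inequalities being exactly the non-degeneracy conditions on the corresponding triangles. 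Van Kampen then assembles these pants representations along the curves $b_i$ into a super-maximal $\rho$ with $\mathbf{\beta}(\rho) = (x_1, \ldots, x_{n-3})$. Points on the boundary of the polytope are handled by continuity: the moment map $\mathbf{\beta}$ is continuous and the image of the compact set $\Rep_{\bf{\alpha}}^{SM}(\Sigma_{0,n}, G)$ is closed.

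For connectedness, any $\rho \in \mathbf{\beta}^{-1}(x_1, \ldots, x_{n-3})$ restricts to a super-maximal representation on each pants $P_j$, whose boundary angles are fixed by $(\alpha_\bullet, x_\bullet)$; by the uniqueness just discussed, all these restrictions are conjugate. Two representations in the fiber therefore differ only by twists along the curves $b_i$, and such twists are precisely produced by the Hamiltonian torus action $\left(\R/\pi\Z\right)^{n-3}$ of Subsection \ref{ss: Hamiltonian action}. Each fiber is thus a single orbit of this torus action, in particular connected. The main obstacle I expect is the boundary of the polytope, where the triangle of some pants degenerates so that $\rho(b_i)$ becomes reducible or trivial: in that regime both the uniqueness of the pants representation and the dimension of the twist orbit drop, and one must verify that the fiber remains connected as a limit of torus orbits in which certain circles collapse to points.
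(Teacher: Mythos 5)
Your proposal follows the same route as the paper's proof: the direct inequalities come from the super-maximality of the restriction of $\rho$ to each pair of pants of the decomposition, the converse from the clockwise triangle-group construction of the (unique up to conjugacy) super-maximal pants representation with prescribed elliptic boundary angles, and connectedness of the fibers from the fact that two gluings along the curves $b_i$ differ by elements of the connected centralizers of the $\rho(b_i)$ --- equivalently, that each fiber is a single orbit of the twist torus.

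The one step you leave unverified --- connectedness of the fiber over boundary points of the polytope --- is a genuine gap in your write-up, but it closes without any limiting argument, and the separate closedness argument for non-emptiness at the boundary is then unnecessary. First, the inequalities \eqref{eq:IneqMP1}--\eqref{eq:IneqMP3} themselves force $0<\bar{\alpha}_1+\bar{\alpha}_2\leq x_1<x_2<\dots<x_{n-3}\leq 2\pi-\bar{\alpha}_{n-1}-\bar{\alpha}_n<2\pi$, so every $x_i$ lies in $(0,2\pi)$: the element $\rho(b_i)$ is always a non-trivial elliptic, its centralizer is a circle everywhere on the closed polytope, and the twist orbits never collapse. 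Second, when equality holds in one of the inequalities, the corresponding triangle degenerates to a point, i.e.\ the three boundary angles of that pants sum to exactly $4\pi$; the super-maximal pants representation still exists and is still unique up to conjugation, namely the abelian representation by three rotations about a common fixed point (this is the case $p_1=p_2=p_3$ in the proof of Proposition \ref{p: reduction}, with $\Vol=0$ and $\Theta=4\pi$, hence $\euler=2$). With existence and uniqueness up to conjugacy of the pants pieces, and connectedness of the centralizers of the $\rho(b_i)$, both valid on the whole closed polytope, your gluing and single-orbit arguments apply verbatim at boundary points; the only difference there is that the torus action on the fiber is no longer free, which does not affect connectedness.
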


\begin{proof}
Let $P_1, \ldots, P_{n-2}$ denote the pants in the pair-of-pants decomposition given in Figure \ref{fig:POPDecomposition}. Let $\rho$ be a representation in $\Rep_{\bf{\alpha}}^{SM}(\Sigma_{0,n}, G)$. Then, as in the proof of Proposition \ref{p: reduction}, the restriction of $\rho$ to $\pi_1(P_i)$ is super-maximal for every $i$. In particular, the sum of the rotation angles of the images of the boundary curves of $P_i$ (with the proper choice of orientation) is at least $4\pi$. Applying this to each $P_i$ gives the required inequalities.

Conversely, let $x_1, \ldots, x_{n-3}$ satisfy the inequalities of Lemma \ref{l:InequalitiesMomentMap}. Fix $2\leq i \leq n-3$. Then we have $4 \pi \leq x_i + (2\pi - x_{i-1}) + \alpha_{i+1} < 6 \pi$. Therefore, there exists a super-maximal representation $\rho_i$ of $\pi_1(P_i)$ sending the boundary curves $\bar{b}_{i-1}$, $b_i$ and $c_{i+1}$ respectively to rotations of angle $2\pi - x_{i-1}$, $x_i$ and $\alpha_{i+1}$. Moreover, this representation is unique up to conjugation. it is obtained by considering a hyperbolic triangle $p_1p_2p_3$ oriented clockwise with angles $\pi - x_{i-1}/2$, $x_i/2$ and $\alpha_{i+1}/2$, and setting $\rho_i(\bar{b}_{i-1}) = \sigma_{p_3p_1}\sigma_{p_1p_2}$, $\rho_i(b_i) = \sigma_{p_1p_2} \sigma_{p_2p_3}$ and $\rho_i(c_{i+1}) = \sigma_{p_2 p_3} \sigma_{p_3 p_1}$ (cf Figure \ref{fig:TriangularRep}).

Similarly, there is a  representation $\rho_1$ (resp. $\rho_{n-2}$) of $\pi_1(P_1)$ (resp. $\pi_1(P_{n-2})$) satifying 
\[\left(\theta(\rho_1(c_1)), \theta(\rho_1(c_2)), \theta(\rho_1(b_1)) \right) = (\alpha_1, \alpha_2, x_1)\] 
(resp. \[\left(\theta(\rho_{n-2}(c_1)), \theta(\rho_{n-2}(c_2)), \theta(\rho_{n-2}(b_1)) \right) = (2\pi - x_{n-3}, \alpha_{n-1}, \alpha_n)~.\]

Now, there is a way to conjugate the $\rho_i$ so that they can be glued together to form a super-maximal representation $\rho$ satisfying $\theta(\rho(c_i)) = \alpha_i$ and $\theta(\rho(b_i)) = x_i$. More precisely, on can choose $g_1$ in $G$, and then recursively choose $g_{i+1} \in G$ such that
\[g_{i+1}\rho_{i+1}(\bar{b}_i)^{-1}g_{i+1}^{-1} = g_i \rho_i(b_i) g_i^{-1}~.\]
(This is possible because both $\rho_{i+1}(\bar{b}_i)^{-1}$ and $\rho_i(b_i)$ are rotations of angle $x_i$.)
There exists a representation $\rho$ whose restriction to each $\pi_1(P_i)$ gives $\Ad_{g_i} \circ \rho_i(b_i)$. Since the restriction of $\rho$ to each $\pi_1(P_i)$ is super-maximal, and since $\rho(b_i)$ is never trivial, the representation $\rho$ itself is super-maximal.

Finally, two choices of $g_{i+1}$ coincide up to left multiplication by an element of the centralizer of $\rho_{i+1}(\bar{b}_i)$ which is connected. Therefore the space of all choices of the $(g_1,\ldots, g_{n-2})$ is connected. Since any super-maximal representation $\rho$ in $\mathbf{\beta}^{-1}(x_1, \ldots, x_{n-3})$ is obtained by such a gluing, it follows that the fiber $\mathbf{\beta}^{-1}(x_1, \ldots, x_{n-3})$ is connected.
\end{proof}

It remains to identify the polytope defined by the equalities \ref{eq:IneqMP1}, \ref{eq:IneqMP2}, \ref{eq:IneqMP3} to the Delzant polytope of a certain torus action on $\ProjC{n-3}$. 

Recall that $\ProjC{n-3}$ carries a natural K\"ahler form $\omega_{FS}$.
There are $n-3$ natural commuting Hamiltonian actions $r_1,\ldots , r_{n-3}$ of $\R/\pi\Z$ on $\ProjC{n-3}$, given by
\[r_k(\theta) \cdot [z_0, \ldots, z_{n-3}] = [z_0, \ldots, z_{k-1}, e^{2i \theta} z_k, z_{k+1}, \ldots, z_n]~.\]
With a convenient scaling of $\omega_{FS}$, a moment map of the action $r_k$ with respect to the Fubini--Study symplectic form is the function
\[\function{\mu_k}{\ProjC{n-3}}{\R}{[z_0, \ldots, z_{n-3}]}{ \frac{|z_k|^2}{\sum_{j=0}^{n-3} |z_j|^2}}~.\]
(See \cite[Example p.317]{Delzant88}.)

The image of the moment map $\mathbf{\mu} = (\mu_1, \ldots, \mu_{n-3})$ is the symplex
\[\{(x_1, \ldots , x_{n-3}) \in \R_+^{n-3}\mid x_1 + \ldots + x_{n-3} \leq 1 ~\}.\]
Though this is not exactly the same symplex as the image of the moment map $\mathbf{\beta}$, it is identical up to translation, dilation, and a linear transformation in $\SL(n,\Z)$.

To be more precise, let us set $\lambda = 2\pi - \sum_{j=1}^n \bar{\alpha}_i$. Let us define an action $r'_k$ of $\R/\pi\Z$ on $\ProjC{n-3}$ by
\[r'_k(\theta) \cdot [z_0, \ldots, z_{n-3}] = [e^{2i\theta} z_0 , \ldots , e^{2i\theta} z_k, z_{k+1}, \ldots, z_n]~.\]
Then the function
\[\mu'_k = \lambda \sum_{j=1}^k \mu_j + \sum_{j=1}^{k+1} \alpha_j\]
is a moment map for the action $r'_k$ with respect to the symplectic form $\lambda \omega_{FS}$. The actions $r'_1, \ldots, r'_{n-3}$ still commute and provide a new action of $(\R/\pi\Z)^{n-3}$ with moment map
\[\mathbf{\mu}' = (\mu'_1, \ldots, \mu'_{n-3})~.\]
Given the affine relation between $\mathbf{\mu}$ and $\mathbf{\mu'}$, one sees that the image of $\mathbf{\mu'}$ is the set of vectors $(x_1, \ldots, x_{n-3})$ in $\R^{n-3}$ satisfying
\begin{eqnarray*}
x_1 & \geq & \bar{\alpha}_1 + \bar{\alpha}_2~,\\
x_j & \geq & x_{j-1} + \bar{\alpha}_{j+1},\quad 2\leq j \leq n-3~,\\
x_{n-3} & \leq & \lambda + \sum_{j=1}^{n-2} \bar{\alpha_j} = 2\pi - \bar{\alpha}_{n-1} - \bar{\alpha}_n~.
\end{eqnarray*}
These are exactly the inequalities \ref{eq:IneqMP1}, \ref{eq:IneqMP2}, \ref{eq:IneqMP3}. By Delzant's theorem, it follows that $\Rep_{\mathbf{\alpha}}^{SM}(\Sigma_{0,n}, G)$ is isomorphic (as a symplectic manifold with a Hamiltonian action of $(\R/\pi\Z)^{n-3}$) to $(\ProjC{n-3}, \lambda \omega_{FS})$ with the action $(r'_1, \ldots, r'_{n-3})$.

In particular, the symplectic volume of $\Rep_{\mathbf{\alpha}}^{SM}(\Sigma_{0,n}, G)$ is equal to 
\[\left(\lambda \right)^{n-3} \int_{\ProjC{n-3}} \omega_{FS}^{n-3}~.\] 
Using the Hamiltonian action $(r_1, \ldots, r_{n-3})$ of $\left(\R/\pi \Z\right)^{n-3}$, we see that
\begin{eqnarray*}
\int_{\ProjC{n-3}} \omega_{FS}^{n-3} &=& \pi^{n-3} \int_{x_i \geq 0, \sum x_i \leq 1} \d x_1 \ldots \d x_{n-3}\\
\ & = & \frac{\pi^{n-3}}{(n-3)!}~.
\end{eqnarray*}

Thus 
\[\int_{\Rep_{\mathbf{\alpha}}^{SM}(\Sigma_{0,n}, G)} \omega_{\textrm{Goldman}}^{n-3} = \frac{(\pi\lambda)^{n-3}}{(n-3)!}~.\]

This ends the proof of Theorem \ref{t:SymplecticSuperMaximal}.

\section{Geometrization of super-maximal representations} \label{s:Geometrization}

In this section we prove Theorem \ref{t: geometrization}. We fix the numbers $\alpha_1,\ldots, \alpha_n\in (0,2\pi)$ such that
$$2\pi (n-1) < \sum _i \alpha_i < 2\pi n .$$ 

Recall the following uniformization theorem:

\begin{theo}[Troyanov, \cite{Troyanov}] \label{t: Troyanov}
Let $\Sigma$ be a compact Riemann surface of genus $g$, and $D = \sum_i \kappa_i p_i  $ be a divisor on $\Sigma$ with coefficients $\kappa_i \in (-\infty, 2\pi]$. Assume that its degree $ \kappa(D) = \sum _i \kappa_i$ satisfies 
$$ \kappa > 2\pi \chi(\Sigma). $$  
Then there exists a unique conformal metric $g_D$ on $\Sigma \setminus \text{Supp} (D)$ having curvature $-1$, and whose completion at each $p_i$ is either a cone of angle $\theta_i = 2\pi -\kappa_i$ if $\kappa_i < 2\pi $, or a parabolic cusp if $\kappa_i=2\pi$. Here $\text{Supp} (D)$ is the union of the $p_i$'s.
\end{theo}

Suppose that $\Sigma = \ProjC{1}$ is the Riemann sphere, and let $p_1,\ldots , p_n$  distinct points on $\ProjC{1}$. Assume now that $Q \in \text{Sym}^{n-3} (\ProjC{1})$, and let 
$$  D := \sum _{i= 1} ^n \alpha_i p_i -2\pi  Q .$$ 
 We have $\sum_i \alpha_i - 2\pi (n-3) > 2\pi \chi(\ProjC{1})$ so Troyanov's theorem yields a conformal metric $g_D$ on $\ProjC{1} \setminus \text{supp}(D)$.  Its completion at the $p_i$'s are cones of angle congruent to  $\overline{\alpha_i}$ modulo $2\pi\mathbb Z$ (depending if some $q_j$'s coalesce with $p_i$), whereas at the points of $\text{supp}(Q)$ they are cones of angle a multiple of $2\pi$ (can be a high multiple if some $q_j$'s coalesce). In particular, there exists a holomorphic map $f : \widetilde{\ProjC{1}\setminus \{ p_1,\ldots , p_n \}} \rightarrow \mathbb H^- $ such that  
\begin{equation} \label{eq: pull-back}     g_D = f^ * \big( \frac{dx^2 + dy^2 } {y^2} \big) \end{equation}
This map $f$ is unique up to post-composition by an orientation preserving isometry of $\mathbb H^-$, namely by an element of $G$. In particular, the map $f$ is equivariant with respect to a representation $\rho : \pi_1(\ProjC{1} \setminus \{ p_1,\ldots , p_n \}) \rightarrow G$, which is well-defined up to conjugacy by an element of $G$. This representation is called the holonomy of $g_D$.

\begin{lem}\label{l: parametrization}
$\rho$ belongs to $\Rep_\alpha ^{SM} (\pi_1(\ProjC{1} \setminus \{ p_1,\ldots , p_n \}), G)$.
\end{lem}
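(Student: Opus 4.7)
The plan is to verify the two defining conditions of $\Rep_\alpha^{SM}$ separately: that $\theta_i(\rho)=\alpha_i$ for each $i$ (giving $\rho\in\Rep_\alpha$), and that $\euler(\rho)=n-1$ (giving super-maximality). Both will follow from reading off the cone angles of $g_D$ and performing a Gauss--Bonnet computation on $\mathbb{CP}^1$.

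Write $Q=\sum_j m_j q_j$ with $\{q_j\}$ the distinct support. Troyanov's theorem identifies the cone angle of $g_D$ at each singular point as $2\pi-\kappa$, where $\kappa$ is the corresponding coefficient of $D$. At $p_i$ one gets cone angle $2\pi-\alpha_i+2\pi m_i^{p}$ (where $m_i^{p}\geq 0$ is the multiplicity of $p_i$ in $Q$), and at the remaining $q_j\in\operatorname{supp}(Q)\setminus\{p_1,\dots,p_n\}$ one gets $2\pi(m_j+1)$. The holonomy around $p_i$ of the developing map $f:\widetilde{\Sigma_{0,n}}\to\H^-$ is a rotation by the cone angle at the corresponding fixed point in $\H^-$, so in particular it is elliptic (since $\alpha_i\in(0,2\pi)$, its cone angle is not a multiple of $2\pi$). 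Because the conjugate fixed point of such an element in $\H^+$ carries the opposite rotation (complex conjugation being orientation reversing on~$\C$), the rotation angle of $\rho(c_i)$ in $\H^+$ in the convention of the paper is the representative in $(0,2\pi)$ of $-(2\pi-\alpha_i)\bmod 2\pi=\alpha_i$. Hence $\theta_i(\rho)=\alpha_i$ and $\rho\in\Rep_\alpha$.

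For the Euler class, I would apply the singular Gauss--Bonnet formula to $(\mathbb{CP}^1,g_D)$:
\[
-\operatorname{Area}(g_D)+\sum_i\bigl(2\pi-(2\pi-\alpha_i+2\pi m_i^{p})\bigr)+\sum_j\bigl(2\pi-2\pi(m_j+1)\bigr)=2\pi\chi(\mathbb{CP}^1)=4\pi.
\]
Since $\sum_i m_i^{p}+\sum_j m_j=\deg(Q)=n-3$, this simplifies to $\operatorname{Area}(g_D)=\sum_i\alpha_i-2\pi(n-1)$, which is indeed positive by hypothesis. The volume $\Vol(\rho)$, defined via maps to $\H^+$, is related to this area through the orientation-reversing isometry $\iota(z)=\bar z$ sending $\H^-$ to $\H^+$: since $\iota^\ast\bigl(\tfrac{dx\wedge dy}{y^2}\bigr)=-\tfrac{dx\wedge dy}{y^2}$, using the equivariant map $\iota\circ f:\widetilde{\Sigma_{0,n}}\to\H^+$ (which is still $\rho$-equivariant because $\iota$ centralizes $\PSL(2,\R)$) one obtains
\[
\Vol(\rho)=\int_{\Sigma_{0,n}}(\iota\circ f)^\ast\tfrac{dx\wedge dy}{y^2}=-\operatorname{Area}(g_D)=2\pi(n-1)-\sum_i\alpha_i.
\]
Plugging this into \eqref{eq: euler class} together with $\Theta(\rho)=\sum_i\alpha_i$ yields $\euler(\rho)=n-1$. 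Since $\sum_i\alpha_i<2\pi n$ forces $\rho$ to be non-trivial, this places $\rho$ in $\Rep_\alpha^{SM}$.

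The main obstacle I expect is the orientation/sign step $\Vol(\rho)=-\operatorname{Area}(g_D)$: one must carefully track that $f$ being holomorphic into $\H^-$ (rather than $\H^+$) is precisely what reverses the sign of the pulled back hyperbolic area form relative to the normalization of $\Vol$. The accounting of cone angles, especially when some $q_j$ coalesce with the $p_i$ or with each other, is another place requiring care, but up to $\bmod\,2\pi$ the holonomy only sees~$\alpha_i$, so the identity $\theta_i(\rho)=\alpha_i$ is unaffected; similarly Gauss--Bonnet is insensitive to whether angles exceed $2\pi$ because the defects $2\pi-\theta$ combine linearly.
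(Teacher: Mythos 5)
Your proof is correct and takes essentially the same route as the paper's: the paper's (much terser) argument likewise reads off $\theta_i(\rho)=\alpha_i$ from the cone angles and computes $\euler(\rho)=\frac{1}{2\pi}\big(-\mathrm{Vol}(g_D)+\sum_i\alpha_i\big)=n-1$ by Gauss--Bonnet, the sign of the volume coming from the developing map taking values in $\H^-$. Your version simply makes explicit the bookkeeping (coalescing points of $Q$, the orientation-reversing identification of $\H^-$ with $\H^+$) that the paper leaves implicit.
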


\begin{proof}
By construction $\theta_i (\rho ) = \alpha_i $. We get 
$$ \euler (\rho) = \frac{1}{2\pi} \big( -\text{Vol} (g_D) + \sum \alpha_i  \big) = n-1$$
by Gauss-Bonnet formula. 
\end{proof}

Let $\mathcal M_{0,n}$ and $\mathcal T_{0,n}$ respectively denote the moduli space and the Teichm\"uller space of $\Sigma_{0,n}$. Those spaces are complex manifolds that can be described in the following way: $\mathcal M_{0,n}$ is identified with the set of tuples $(p_1,\ldots, p_{n-3}) \in (\ProjC{1} \setminus \{ 0, 1, \infty \})  ^ {n-3}$ of distincts points, and $\mathcal T_{0,n}$ is the universal cover of $ \mathcal M_{0,n}$. In this description, the conformal structure corresponding to the tuple $(p_1,\ldots, p_{n-3})$ is $\ProjC{1} \setminus \{ p_1,\ldots, p_n \}$, where $p_{n-2}=0$, $p_{n-1}= 1$ and $p_n = \infty$. An element of $\mathcal T_{0,n}$ is the data of a tuple $(p_1,\ldots, p_{n-3})$ as before, plus a class modulo isotopy of diffeomorphisms $\phi : \Sigma_{0,n} \rightarrow \ProjC{1}\setminus \{p_1,\ldots, p_n\}$.

Lemma \ref{l: parametrization} enables to define a map 
$$  H : \mathcal T_{0,n} \times \text{Sym}^{n-3} (\ProjC{1}) \rightarrow \mathcal T_{0,n} \times \Rep^{SM}_\alpha (\pi_1(\Sigma_{0,n}), G),$$
by the formula
\begin{equation} H (p_1,\ldots, p_n, [\phi], Q ) =  (p_1,\ldots, p_n, [\rho \circ \phi_* ] ) \end{equation}
where $[\rho] $ is the holonomy of the metric $g_D$ on $\ProjC{1} \setminus \text{Supp} (D)$, with $D = \sum_i \alpha_i p_i - 2\pi Q$. As explained above, the holonomy is well defined on the fundamental group of $\ProjC{1} \setminus \{p_1,\ldots, p_n\}$.

\begin{lem}
$H$ is injective.
\end{lem}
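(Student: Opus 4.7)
The plan is to reduce the injectivity of $H$ to a uniqueness statement for $\rho$-equivariant holomorphic maps into $\mathbb H^-$. Since $H$ acts as the identity on the $\mathcal T_{0,n}$ factor, it is enough to fix a marked punctured sphere $\tau = (p_1, \ldots, p_n, [\phi]) \in \mathcal T_{0,n}$ and show that $Q \mapsto [\rho_Q \circ \phi_*]$ is injective. Set $X = \ProjC{1} \setminus \{p_1, \ldots, p_n\}$. If $Q$ and $Q'$ produce conjugate holonomies, then after conjugating the developing map attached to $g_{D'}$ by the relevant element of $G$, I obtain two $\rho$-equivariant holomorphic maps $f, f' : \widetilde{X} \to \mathbb H^-$ whose pullbacks of the Poincar\'e metric are the Troyanov metrics $g_D$ and $g_{D'}$ respectively, and the task becomes to prove $f \equiv f'$.

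For this I would study the function $u(z) := d_{\mathbb H^-}(f(z), f'(z))$, which descends from $\widetilde X$ to a well-defined continuous function on $X$ because $\rho$ acts by hyperbolic isometries. The key classical input is that the hyperbolic distance between two holomorphic maps into $\mathbb H^-$ is subharmonic; equivalently, the pseudohyperbolic quantity $\left|(f - f')/(f - \overline{f'})\right|$ is subharmonic, and the genuine distance is obtained from it by composing with the increasing convex function $2\,\mathrm{arctanh}$. I would then analyze the behaviour of $u$ at each puncture: since every $\alpha_i$ lies in $(0, 2\pi)$, the element $\rho(c_i)$ is elliptic with a unique fixed point $q_i \in \mathbb H^-$, and because the Troyanov metrics have finite cone angles at $p_i$, both $f$ and $f'$ send small punctured neighbourhoods of $p_i$ into arbitrarily small neighbourhoods of $q_i$, so $u(z) \to 0$ as $z \to p_i$. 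Extending $u$ continuously to $\ProjC{1}$ by $u(p_i) := 0$, the removable-singularity principle for subharmonic functions produces a subharmonic function on the compact Riemann sphere, which must be constant by the maximum principle; the boundary values then force $u \equiv 0$, hence $f \equiv f'$.

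From $f = f'$ one deduces $g_D = g_{D'}$, and since the divisor $D$ is recovered from $g_D$ via its cone singularities (the coefficient at each point being $2\pi$ minus the cone angle prescribed by Troyanov's theorem), this yields $D = D'$ and therefore $Q = Q'$. The main obstacle I anticipate is pinning down the subharmonicity of $u$ cleanly; I would either cite it from hyperbolic geometric function theory or derive it from the standard fact that for holomorphic $f, f' : U \to \mathbb H^-$ the function $\log\left|(f - f')/(f - \overline{f'})\right|$ is subharmonic wherever it is defined, a classical ingredient of Nevanlinna--Pick type arguments.
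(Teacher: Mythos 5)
Your proposal is correct and follows essentially the same route as the paper: both arguments compare the two developing maps $f,f'$ for the same holonomy $\rho$ via a subharmonic function of their hyperbolic distance (the paper uses $\log\bigl(\tfrac{\cosh d(f,f')-1}{2}\bigr)=\log|f-f'|^2-\log(1-|f|^2)-\log(1-|f'|^2)$ in the disc model, which is the logarithm of your pseudohyperbolic quantity), observe that it tends to $0$ at the punctures because $f$ and $f'$ both converge to the fixed point of the elliptic peripheral, and conclude $f\equiv f'$ by the maximum principle. The only cosmetic difference is that you extend the function over the punctures and invoke constancy of subharmonic functions on the compact sphere, whereas the paper derives a contradiction from strict subharmonicity at an interior maximum.
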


\begin{proof}
 Assume that we are given disctinct points $p_1,\ldots, p_n$, and two elements $Q, Q' \in \text{Sym}^{n-3} (\mathbb C {\bf P}^1)$ such that 
$$H(p_1,\ldots, p_n, Q) = H(p_1,\ldots, p_n, Q') .$$ 
If $D= \sum \alpha_i p_i - 2\pi Q$ and $D'= \sum \alpha_i p_i - 2\pi Q$, we can find developing maps $f$ and $f'$ of $g_D$ and $g_{D'}$ respectively that are equivariant with respect to the same representation $\rho : \pi_1(\ProjC{1} \setminus \{ p_1,\ldots, p_n \})\rightarrow G$. 

Consider the following function 
$$ \delta = \varphi \circ d(f, f')$$ 
where $d(.,.)$ is the hyperbolic distance in $\mathbb H$, and where $\varphi =  \frac{\cosh - 1}{2} $. The function $\varphi$ descends to a function defined on $\ProjC{1} \setminus \{ p_1,\ldots, p_n\}$. Notice that by Schwarz lemma, $f$ and $f'$ are Lipschitz from the uniformizing metric of $\mathbb P^1 \setminus \{ p_1,\ldots, p_n \} $ to $\mathbb H$ so that $f$ and $f'$ have the same limit in each connected component of a preimage of a cusp: the fixed point of the $\rho$ image of the stabilizer of the component.
 In particular, the function $\varphi$ tends to $0$ at each of the cusps $p_i$'s. 

Assume by contradiction that $\delta$ is not identically zero. The set of zero values is then discrete by holomorphicity of $f-f'$.  Identifying holomorphically the upper half-plane $\mathbb H$ with the unit disc $\mathbb D$, we have 
$$ \log \delta  = \log |f - f' |^2 - \log (1- |f|^2) -  \log (1- |f'|^2) .$$
In particular $\log \delta$ is strictly subharmonic everywhere, appart eventually at the points where $f$ and $f'$ have zero derivative. This contradicts the maximum principle. 

The lemma follows.
\end{proof}

\begin{prop}
$H$ is continuous.
\end{prop}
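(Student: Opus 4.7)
The plan is to combine the compactness of $\Rep_\alpha^{SM}(\pi_1(\Sigma_{0,n}),G)$ from Proposition~\ref{p:CompacitySuperMax} with a normal-families argument on developing maps, using the uniqueness part of Troyanov's theorem for identification. Since the first factor of $H$ is the identity on $\mathcal T_{0,n}$, only continuity of the representation component needs to be checked. Let $x_k = (p_1^k,\dots,p_n^k,[\phi_k],Q_k)$ converge to $x_\infty$ in $\mathcal T_{0,n}\times \text{Sym}^{n-3}(\ProjC{1})$, and set $[\rho_k] = H(x_k)$. Because $\Rep_\alpha^{SM}(\pi_1(\Sigma_{0,n}), G)$ is compact and metrizable, it suffices to prove that every convergent subsequence of $([\rho_k])$ has $[\rho_\infty] := H(x_\infty)$ as its limit.

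Passing to such a subsequence and choosing normalized representatives, one may assume $\rho_k \to \rho^*$ in $\Hom(\pi_1(\Sigma_{0,n}),G)$. Let $f_k : \widetilde{\Sigma_{0,n}} \to \mathbb H^-$ be the $\rho_k$-equivariant developing map associated to the metric $g_{D_k}$ with $D_k = \sum_i \alpha_i p_i^k - 2\pi Q_k$, normalized by $f_k(\tilde{x}_0) = -i$ and $f_k'(\tilde{x}_0) > 0$ for a fixed base point $\tilde{x}_0 \in \widetilde{\Sigma_{0,n}}$. By the Schwarz--Pick lemma, each $f_k$ is $1$-Lipschitz from $\widetilde{\Sigma_{0,n}}$ equipped with the pulled-back hyperbolic uniformizing metric of its underlying Riemann surface structure to $\mathbb H^-$. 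As $x_k \to x_\infty$, these conformal structures converge in $\mathcal T_{0,n}$ and so do their uniformizing metrics locally smoothly, so the family $(f_k)$ is locally equicontinuous on $\widetilde{\Sigma_{0,n}}$. Montel's theorem then extracts a subsequence converging locally uniformly to a holomorphic map $f_\infty : \widetilde{\Sigma_{0,n}} \to \overline{\mathbb H^-}$, which is $\rho^*$-equivariant by passing to the limit in the equivariance relation.

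It remains to identify $f_\infty$ with the developing map of $g_{D_\infty}$. Pulling back the hyperbolic metric by $f_k$ returns the Troyanov metric $g_{D_k}$ on $\ProjC{1} \setminus \text{Supp}(D_k)$, whose conformal factor solves a Liouville equation with singular data prescribed by $D_k$. Continuity of Troyanov's construction in the divisor (a consequence of the variational characterization in \cite{Troyanov}) yields that $g_{D_k} \to g_{D_\infty}$ locally smoothly on $\ProjC{1} \setminus \text{Supp}(D_\infty)$ with matching cone angles in the limit, hence $f_\infty^{\,*}\bigl(\tfrac{dx^2+dy^2}{y^2}\bigr) = g_{D_\infty}$. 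Therefore $f_\infty$ is a $\rho^*$-equivariant developing map for $g_{D_\infty}$, which by uniqueness of the developing map coincides (up to our normalization) with the one defining $H(x_\infty)$, giving $\rho^* = \rho_\infty$ and hence the claimed continuity. The main subtlety is ruling out both a degeneration in which $f_\infty$ becomes constant (excluded because the Troyanov metric $g_{D_\infty}$ is non-degenerate away from its support) and a jump in the cone angles that could arise when points of $Q_k$ coalesce with each other or with some $p_i^k$; both are controlled by the uniform Schwarz--Pick bound combined with the variational continuity of $g_{D_k}$ in the divisor.
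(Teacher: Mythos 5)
Your overall skeleton matches the paper's: extract a subsequence, use the Schwarz lemma to get uniform bounds on the developing maps/metrics, pass to a limit, and identify the limit via the uniqueness part of Troyanov's theorem. However, the crucial step is exactly the one you dispatch in a single clause: you assert that ``continuity of Troyanov's construction in the divisor (a consequence of the variational characterization in \cite{Troyanov})'' gives $g_{D_k}\to g_{D_\infty}$ \emph{with matching cone angles}. No such statement is available off the shelf, and it is precisely the content that has to be proved. The danger, which you correctly name but do not actually control, is that when points of $Q_k$ coalesce with each other or with some $p_i^k$, curvature can a priori concentrate at the collision point, so the limit of the metrics could be a conical hyperbolic metric for a divisor \emph{different} from $D_\infty$ (with some angle defect lost in the limit). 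Troyanov's theorem is an existence/uniqueness statement for a \emph{fixed} divisor; its variational proof does not by itself yield continuity of the solution under a degeneration in which the support of the divisor changes. The uniform Schwarz--Pick bound only gives an upper bound on the limit metric (hence that the completion at each singular point is conical); it gives no lower bound on the cone angles and cannot by itself rule out loss of curvature mass.

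The paper closes this gap by a genuinely different mechanism: it invokes the Burger--Iozzi--Wienhard theorem that the volume of a representation is continuous on $\Hom(\pi_1(\Sigma_{0,n}),G)$, which forces $\int \mathrm{vol}(g_\infty)=\lim_k\int\mathrm{vol}(g_k)$ (``conservation of volume''); combined with Fatou's lemma this shows no volume escapes to the singular points, and then Gauss--Bonnet applied on small neighbourhoods $U$ of each singular point identifies $\kappa_\infty(q)$ with the coefficient of $q$ in $D_\infty$. Only then does the uniqueness in Troyanov's theorem apply. Your proof needs either this volume-conservation argument or an independent proof of the continuity of the Troyanov metric under coalescence of divisor points; as written, the identification $f_\infty^{\,*}\bigl(\tfrac{dx^2+dy^2}{y^2}\bigr)=g_{D_\infty}$ is not justified. (A secondary, smaller issue: you exclude degeneration of $f_\infty$ to a constant ``because $g_{D_\infty}$ is non-degenerate,'' but that presupposes the very identification of the limit you are trying to establish.)
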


\begin{proof}
Assume that  $$ (p_1^k, \ldots, p_n^k , [\phi^k ], Q^k) \rightarrow_{k\rightarrow \infty} (p_1,\ldots, p_n , [\phi], Q).$$ One can choose the diffeomorphisms $\phi_{k} : \Sigma_{0,n} \rightarrow \mathbb P^1 \setminus\{ p_1^k,\ldots, p_n^k\}$ in such a way that they converge uniformly on compact subsets of $\Sigma_{0,n}$ to $\varphi$ in the smooth topology.

The family of metrics $g_k := (\phi^k)^* g_{D_k}$ is bounded on each compact set of $\Sigma_{0,n}\setminus \phi^{-1} (\text{Supp}(Q))$, by the Schwarz Lemma. Since it satisfies an elliptic PDE, it is bounded in the smooth topology on compact sets. In particular, to prove our claim, it suffices to prove that if a subsequence $g_{k_j}$ converges in the smooth topology to some metric $g_\infty$ on compact subsets of $\Sigma_{0,n} \setminus \phi^{-1} (Q)$, then in fact $g_\infty = \varphi ^* g_D$. Indeed, this will prove that $g_k$ converges to $\varphi^* g_D$ in the smooth topology, and in particular, that the holonomies $\rho_k$ of $(\phi^k)^* g_{D_k}$ tend to the holonomy $\rho $ of $\phi^* g_D$ when $k$ tends to infinity. 

So let us assume in the sequel that $g_k$ converge to $g_\infty$ when $k$ tends to infinity, and let us prove that $g_\infty = \phi^* g_D$. By Schwarz lemma again, the metric $g_\infty$ is bounded by  $\phi^* g_P$, where $g_P$ is the Poincar\'e metric on $\ProjC{1} \setminus \big( \{ p_1, \ldots, p_n\} \cup \text{supp} (Q) \big)$. In particular,  at each point $q$ of $\phi^{-1} (\text{Supp} (Q) ) \cup \{1,\ldots,n\}$ it admits a completion isometric to a cone. Denote by $\kappa_\infty (q)$ the curvature of $g_\infty$  at such a point. By the unicity part of Troyanov uniformization theorem, it suffices to prove that $\kappa_\infty(q)$ is the coefficient of $q$ in the divisor $D$.

 Burger, Iozzi and Wienhard proved that the volume of a representation depends continuously on this latter, see \cite[Theorem 1]{BIW}. In particular, we have the following property 
\begin{equation}\label{eq: conservation of volume} \int _{\Sigma_{0,n}} \text{vol} (g_\infty) = \text{Vol} (\rho_\infty) = \lim _{k\rightarrow \infty} \text{Vol} (\rho_k) = \lim _{k\rightarrow \infty} \int _{\Sigma_{0,n}} \text{vol} (g_k), \end{equation}
where $ \text{vol} (g)$ stands for the volume form of $g$ on $\Sigma_{0,n}$.

Let $U\subset \overline{\Sigma_{0,n}}$ be any open set with smooth boundary, not containing points of $\phi^{-1} (\text{Supp}(D))$ on its boundary.  The uniform convergence of $g_k$ to $g$ on compact subset of $\Sigma_{0,n} \setminus \phi^{-1}(\text{Supp}(D))$, together with Fatou Lemma, show that 
\begin{equation}\label{eq: Fatou} \int _U \text{vol} (g_\infty) \leq \liminf _{k\rightarrow \infty} \int _U \text{vol} (g_k) . \end{equation}
Any strict inequality in \eqref{eq: Fatou} would contradict the conservation of volume \eqref{eq: conservation of volume}. So 
$$ \int _U \text{vol} (g_\infty) = \lim _{k\rightarrow \infty} \int _U \text{vol} (g_k) .$$
Applying Gauss Bonnet to the metrics $g_k$ on $U$, and using the uniform convergence of $g_k$ to $g_\infty$, we get 
$$ \sum _{q \in U\cap \phi^{-1} (\text{Supp} (D))} \kappa_\infty (q) = \lim_{k\rightarrow  \infty} \sum _{q \in U\cap \phi^{-1} (\text{Supp} (D))} \kappa_k (q). $$ 
This concludes the proof that $\kappa_\infty(q)$ is the coefficient of $D$ at the point $q$. 
\end{proof}

\begin{coro}
$H$ is a homeomorphism. In particular, for $\tau\in \mathcal T_{0,n}$, the map
$$H(\tau, \cdot): \text{Sym}^{n-3} (\mathbb C {\bf  P}^1)\simeq \mathbb C {\bf  P}^{n-3} \rightarrow \Rep _\alpha^{SM} (\pi_1(\Sigma_{0,n}), G) $$
is a homeomorphism.
\end{coro}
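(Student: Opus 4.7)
The plan is to reduce the statement to the fiberwise map $H(\tau,\cdot)$, apply Brouwer's invariance of domain together with the identification of both fibers with $\ProjC{n-3}$ to upgrade injectivity to surjectivity on each fiber, and then extend to the total map using compactness of the fiber.

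First, I would fix $\tau \in \mathcal{T}_{0,n}$. By the classical Vieta identification through elementary symmetric functions, $\text{Sym}^{n-3}(\ProjC{1}) \cong \ProjC{n-3}$. By Theorem \ref{t:SymplecticSuperMaximal}, the target $\Rep^{SM}_\alpha(\pi_1(\Sigma_{0,n}), G)$ is also homeomorphic to $\ProjC{n-3}$. The preceding lemma and proposition show that the restriction $H(\tau, \cdot)$ between these two copies of $\ProjC{n-3}$ is continuous and injective. By Brouwer's invariance of domain, it is therefore an open map; its image is then simultaneously closed (compact in a Hausdorff space) and open in the connected space $\ProjC{n-3}$, so it must equal the whole target. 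Hence $H(\tau, \cdot)$ is a continuous bijection from a compact space to a Hausdorff space, thus a homeomorphism.

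To deduce that $H$ itself is a homeomorphism, I would verify that $H^{-1}$ is continuous. Consider a sequence $(\tau_k, \rho_k) \to (\tau_\infty, \rho_\infty)$ in the target and write $(\tau_k, Q_k) = H^{-1}(\tau_k, \rho_k)$. By compactness of $\text{Sym}^{n-3}(\ProjC{1})$, every subsequence of $(Q_k)$ admits a further subsequence converging to some $Q_\infty$. Continuity of $H$ then gives $H(\tau_\infty, Q_\infty) = (\tau_\infty, \rho_\infty)$, and injectivity of $H(\tau_\infty, \cdot)$ (already established fiberwise) forces $Q_\infty = H(\tau_\infty, \cdot)^{-1}(\rho_\infty)$. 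Since every subsequence shares the same limit, the full sequence $Q_k$ converges, proving continuity of $H^{-1}$. The ``in particular'' assertion is exactly the content of the first paragraph.

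The only genuinely non-trivial input here is Theorem \ref{t:SymplecticSuperMaximal}, which identifies the target with a closed connected manifold of the expected real dimension $2(n-3)$; once that is granted, the rest is a standard application of invariance of domain followed by a compactness argument, and I do not anticipate any substantive obstacle.
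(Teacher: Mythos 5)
Your argument is correct and rests on the same key tool as the paper's proof, namely Brouwer's invariance of domain combined with a compactness/properness argument to obtain continuity of the inverse; the paper simply applies invariance of domain once to the global map $H$ (continuous, proper, injective between connected manifolds of the same dimension), whereas you apply it fiberwise using the identification of $\Rep^{SM}_\alpha(\pi_1(\Sigma_{0,n}),G)$ with $\ProjC{n-3}$ from Theorem \ref{t:SymplecticSuperMaximal} and then handle the inverse by a subsequence extraction. This is a legitimate minor repackaging (and not circular, since Theorem \ref{t:SymplecticSuperMaximal} is established independently beforehand), not a genuinely different route.
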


\begin{proof}
The map $H$ is a continuous, proper, injective map between connected manifolds of the same dimension. So the result is a consequence of the invariance of domain theorem.
\end{proof}

\bibliographystyle{alpha-fr} 
\bibliography{bibli} 

\end{document}